\documentclass[11pt]{article}
\usepackage[T1]{fontenc}
\usepackage{amsmath,amsfonts,amsthm,mathrsfs,amssymb,cite}
\usepackage{graphicx}
\usepackage{subfigure}
\usepackage{placeins}
\usepackage{color}
\usepackage{indentfirst}
\usepackage[colorlinks=true]{hyperref}

\usepackage[dvipsnames, svgnames, table]{xcolor}
\usepackage{tikz} 
\usetikzlibrary{patterns}
\usepackage{tikz-3dplot}

\numberwithin{equation}{section}
\newcommand{\R}{{\mathbb R}}

\newcommand{\C}{{\mathbb C}}
\newcommand{\s}{{\mathbb S}}

\newcommand{\no}{\nonumber}
\newcommand{\be}{\begin{eqnarray}}
\newcommand{\ben}{\begin{eqnarray*}}
\newcommand{\en}{\end{eqnarray}}
\newcommand{\enn}{\end{eqnarray*}}

\newtheorem{theorem}{Theorem}[section]
\newtheorem{lemma}[theorem]{Lemma}
\newtheorem{corollary}[theorem]{Corollary}

\newtheorem{remark}[theorem]{Remark}
\newtheorem{proposition}[theorem]{Proposition}

\definecolor{rot}{rgb}{1.000,0.000,0.000}
\definecolor{rot1}{rgb}{0.000,0.000,0.000}

\begin{document}
\renewcommand{\theequation}{\arabic{section}.\arabic{equation}}

\begin{titlepage}
\title{\bf A Paley-Wiener-Type Theorem with Applications to Inverse Medium Scattering for the Schr\"odinger Equation}
\author{Guanghui Hu\thanks{School of Mathematical Sciences and LPMC, Nankai University, Tianjin 300071, China. Email: \texttt{ghhu@nankai.edu.cn}}, 
Yubiao Zhang\thanks{School of Mathematics, Jilin University, Changchun 130012, China. Email: \texttt{yubiao.zhang.math@outlook.com}}, 
Tianyi Zheng\thanks{School of Mathematical Sciences and LPMC, Nankai University, Tianjin 300071, China. Email: \texttt{9820250139@nankai.edu.cn}} 
and Jun Zou\thanks{(Corresponding author) Department of Mathematics, The Chinese University of Hong Kong, Shatin, N.T., Hong Kong
	SAR. Email: \texttt{zou@math.cuhk.edu.hk}}}
\date{}
\end{titlepage}
\maketitle
\vspace{.2in}

\begin{abstract}
This paper develops a new uniqueness framework for inverse scattering problems for the time-harmonic Schr\"odinger equation using multi-frequency data. Our approach is based on a Paley-Wiener-type theorem that characterizes the support of the potential through the asymptotic growth of a generalized Fourier integral along imaginary frequencies, allowing for nontrivial amplitude functions.

Within this framework, we obtain two major results on identifying the shape of the support of the potential in three dimensions, for both the plane-wave incidence and point-source incidence, respectively. For plane-wave incidence, we prove that the location and shape of the convex compact support of an essentially bounded potential are uniquely determined by fixed-angle far-field data from two opposite incident directions or by backscattering far-field data, provided that the potential is real-valued and sign-definite near the boundary of its support. For point-source incidence, we show that near-field backscattering data, namely the scattered field evaluated at the source point as the source ranges over a sufficiently large sphere, uniquely determine the location and shape of the convex support under the same assumptions on the potential. The proofs rely on meromorphic extension and the Paley-Wiener-type theorem.

\vspace{0.5cm}
\noindent

\vspace{.2in} {\bf Keywords}: Uniqueness, inverse medium scattering, multi-frequency data, Schr\"odinger equation, fixed-angle scattering, backscattering, Paley-Wiener-type theorem, shape identification. 

\vspace{.2in} {\bf 2020 Mathematics Subject Classification}: 35R30, 74B05, 78A46.
\end{abstract}

\section{Introduction}
\label{sec:introduction}

Let \(q\in L^\infty(\mathbb R^3)\) satisfy \(\operatorname{Im} q \geq 0\) and \(\operatorname{supp}(q)\subset B_R\) for some $R>0$. 
Consider the stationary Schr\"odinger equation in three dimensions:
\begin{equation}\label{eqs}
\Delta u + \kappa^2 u + q u = 0 ~\text{in} ~ \mathbb R^3,
\end{equation}
where \(\kappa>0\) is the wavenumber. The total field
\(u = u^{\mathrm{in}} + u^{\mathrm{sc}}\) consists of an incident plane wave of the form
\(u^{\mathrm{in}}(x)=e^{i\kappa x\cdot d}\) with propagation direction \(d\in\mathbb S^2\) and a scattered wave \(u^{\mathrm{sc}}\). The scattered field is required to satisfy the Sommerfeld
radiation condition
\begin{equation}\label{eq:radiation}
\lim_{r\to+\infty} r\Bigl(\frac{\partial u^{\mathrm{sc}}}{\partial r}-i\kappa u^{\mathrm{sc}}\Bigr)=0, ~ r=|x|,
\end{equation}
uniformly in all directions \(\hat x = x/|x|\).  

Any function $v$ in the space $H^2_{\mathrm{loc}}(\mathbb R^3) \cap C^1\big( \mathbb R^3 \backslash B_R \big)$ is said to be a solution to the scattering problem (1.1)--(1.2) if $v$ and $v^{\mathrm{sc}}:=v-e^{i\kappa x\cdot d}$ satisfy \eqref{eqs} and \eqref{eq:radiation}, respectively. By Theorem \ref{th:sw}, for each $d \in \mathbb S^2$ and $\kappa >0$, the scattering problem \eqref{eqs} and \eqref{eq:radiation} has a unique solution, denoted by $u(\cdot; d, \kappa)$. 

By Corollary \ref{Corollary:FarField-20261005}, the following asymptotic behavior holds for some \(u^{\infty}(\cdot; d, \kappa) \) in $ C( \mathbb S^2 )$: 
\begin{equation}\label{eq:farfield}
u^{\mathrm{sc}}(x;d,\kappa) = \frac{e^{i\kappa |x|}}{|x|}u^\infty(\hat x;d,\kappa)+ \mathcal{O}\Bigl(\frac1{|x|^2}\Bigr)
~\text{ as } |x|\to+\infty,
\end{equation}
uniformly in all directions \(\hat x = x/|x|\). The function \(u^\infty(\hat x;d,\kappa)\) is called the \emph{far-field pattern} or \emph{scattering amplitude}.

The inverse scattering problem considered in this paper is to recover information about the support of \(q\) from measurements of \(u^\infty(\hat x;d,\kappa)\). More precisely, we consider the following problem:
\begin{itemize}
\item[\textbf{(IP)}] \textbf{Shape identification}: determine the location and shape of \(\operatorname{supp}(q)\).
\end{itemize}

In inverse scattering theory, several types of measurement data are commonly employed for plane wave incidence:

\begin{enumerate}
\item[\textbf{(a)}] \textit{Fixed-energy scattering data}: $u^{\infty}(\theta;d,\kappa_0)$ known for all $d,\theta\in\mathbb{S}^2$ at a fixed wavenumber $\kappa_0>0$.
\item[\textbf{(b)}] \textit{Fixed-angle scattering data}: $u^{\infty}(\theta;d_0,\kappa)$ known for a fixed incident direction $d_0 \in \mathbb{S}^{2}$, all observation directions $\theta\in\mathbb{S}^2$, and all wavenumbers $\kappa>0$.
\item[\textbf{(c)}] \textit{Backscattering data}: $u^{\infty}(-\theta;\theta,\kappa)$ known for all incident directions $\theta \in \mathbb{S}^{2}$ and all wavenumbers $\kappa>0$.
\end{enumerate}

Analogous inverse problems can be formulated using measurement data generated by point-source incidence. 
Previous research has established several important results for these measurement types. For fixed-energy measurements, the data are equivalent to Dirichlet-to-Neumann (DtN) measurements, and the problem was first solved in \cite{SU87} for smooth, compactly supported, real-valued potentials. For fixed-angle measurements, the results in \cite{M18, R01, S08} show that singularities of the potential $q$ can be recovered. Subsequently, the results in \cite{RS20-1,RS20-2} established that smooth, compactly supported, real-valued potentials can be recovered from fixed two-angle scattering data, while \cite{MMS21} extended these results to the case of a first-order perturbation.
For backscattering measurements, the first generic uniqueness results in the frequency-domain setting were obtained in \cite{ER89, ER91}; analogous time-domain results were later proved in \cite{S90, S92}. The work in \cite{GU93} established that the principal symbol of $q$ (which characterizes its major singularities) can be recovered from backscattering data using time-domain methods. Similar results in the frequency domain were subsequently obtained in \cite{BFRV13, M21, OPS01, R07, RV05, S08}.

To the best of our knowledge, problem (IP) has not previously been considered in the literature. In this paper, we focus on the three-dimensional inverse problem \textbf{(IP)} for complex-valued potentials using measurement data of type \textbf{(b)} or \textbf{(c)} with plane wave incidence or point source incidence. By meromorphic extension (see Theorems \ref{th:sw} and \ref{th:ps}), knowledge of the measurement data for all $\kappa > 0$ is equivalent to knowledge of the data for $\kappa$ in a finite open interval. Hence, we use multi-frequency data over a finite frequency interval $\kappa \in (\kappa_{\min}, \kappa_{\max})$.

To state our main results, we make the following assumptions on the potential: 
\begin{itemize}
    \item[ ] \textbf{Assumption (H):} The potential $q \in L^\infty(\mathbb{R}^3)$ satisfies $\operatorname{Im} q \geq 0$ and has convex compact support $\overline{D} := \operatorname{supp}(q) \subset B_R$ (for some domain $D$). Moreover, $q$ keeps a uniform sign around the boundary of its support (see Figure \ref{fig:convex-domain}), i.e., there is an open set $U\supset \partial D$ such that
    \begin{align}\label{UniformSignConditionOnPotential-20261003}
       \text{ either }~ q \geq 0  \text{ in } U
       ~\text{ or }~
       q \leq 0  \text{ in } U. 
    \end{align}
\end{itemize}

\begin{figure}[htbp]
    \centering
\begin{tikzpicture}

\def\R{3}
\pgfmathsetmacro{\a}{\R*sqrt(3)}

\coordinate (A) at (0,\R);
\coordinate (B) at ({-\R*cos(30)}, {-\R*sin(30)});
\coordinate (C) at ({\R*cos(30)}, {-\R*sin(30)});

\def\mypath{
    (A) arc[start angle=120, delta angle=60, radius=\a]
         arc[start angle=240, delta angle=60, radius=\a]
         arc[start angle=0,   delta angle=60, radius=\a]
         -- cycle
}

\filldraw[fill=red!20, draw=blue, thick] \mypath;

\begin{scope}[scale=0.9, shift={(0,-0.05)}]

\coordinate (A) at (0,\R);
\coordinate (B) at ({-\R*cos(30)}, {-\R*sin(30)});
\coordinate (C) at ({\R*cos(30)}, {-\R*sin(30)});

\def\mypath{
    (A) arc[start angle=120, delta angle=60, radius=\a]
         arc[start angle=240, delta angle=60, radius=\a]
         arc[start angle=0,   delta angle=60, radius=\a]
         -- cycle
}

    \filldraw[fill=blue!30, draw=red, thick] \mypath;
\end{scope}

\draw[->, thick, >=stealth] (1,0) to[out=30, in=180] (3,2.5) node[right] {$\operatorname{Im} q \geq 0$};
\draw[->, thick, >=stealth] (2.35,-0.45) to[out=30, in=180] (3,0.5) node[right] {$q\geq0$ or $q\leq0$};

\end{tikzpicture}

    \caption{Illustration of the convex support of $q$, with the sign-definiteness condition imposed near its boundary.}
    \label{fig:convex-domain}
\end{figure}

\begin{remark}
Unlike the existing results discussed above, our results allow \(q\) to be complex-valued, as long as \(\operatorname{Im} q \ge 0\). We use this condition to establish well-posedness of the scattering problem \eqref{eqs}-\eqref{eq:radiation} (see Step 3 in the proof of Theorem \ref{th:sw}). Without this condition, we still do not know how to show this result. Furthermore, near the boundary $\partial D$, we additionally assume $q$ to be real-valued and sign-definite. This requirement stems from the use of our Paley-Wiener-type theorem (see Theorem~\ref{thm-growth-support}).
\end{remark}

We can now state our main theorems. For the plane wave incidence:
\begin{equation}\label{ps}
u^{\mathrm{in}}(x; d, \kappa) := e^{i \kappa x\cdot d},~ x\in \mathbb R^3
\quad\text{where }d\in\mathbb{S}^2 \text{ and } \kappa >0,
\end{equation}
we denote the far field pattern by $u^\infty(\theta; d, \kappa)$.

\begin{theorem}[Plane wave incidence]\label{TH:SE}
    Suppose that $\mathrm{Assumption}$ $\mathrm{\mathbf{(H)}}$ holds.
Let $0 < \kappa_{\min} < \kappa_{\max} < \infty$. Then, the shape and location of $\overline{D}$ are uniquely determined by either of the following sets of measurement data:
\begin{itemize}
\item[$\mathrm{(i)}$] \textit{Fixed-angle scattering}: $u^\infty(\theta; d, \kappa)$ for all $\theta\in \mathbb{S}^2$, $\kappa\in (\kappa_{\min}, \kappa_{\max})$, and $d = \pm d_0$ for some fixed $d_0 \in \mathbb{S}^2$;
\item[$\mathrm{(ii)}$] \textit{Backscattering}: $u^\infty(\theta; -\theta, \kappa)$ for all $\theta\in \mathbb{S}^2$ and $\kappa\in (\kappa_{\min}, \kappa_{\max})$.
\end{itemize}
\end{theorem}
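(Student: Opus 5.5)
Since $\overline D$ is convex and compact, it equals the intersection of the half-spaces $\{x: x\cdot\xi\le h_D(\xi)\}$, where $h_D(\xi)=\max_{x\in\overline D}x\cdot\xi$ is the support function. The plan is therefore to recover $h_D(\xi)$ from the data for every direction $\xi$ that the data give access to.

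\textbf{Step 1 (extension to all frequencies).} By the meromorphic extension results (Theorem~\ref{th:sw}), data on $(\kappa_{\min},\kappa_{\max})$ determine the far-field pattern for all $\kappa>0$. They also determine its meromorphic continuation to a region containing the positive imaginary axis $\kappa=i\tau$, $\tau\gg1$, where we expect the continuation to be analytic. We will work on this ray.

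\textbf{Step 2 (Born-type representation).} We use the Lippmann--Schwinger representation
\[
u^\infty(\theta;d,\kappa)=\frac1{4\pi}\int_D e^{-i\kappa\theta\cdot y}q(y)u(y;d,\kappa)\,dy .
\]
Writing $u=e^{i\kappa y\cdot d}(1+w)$, this becomes
\[
u^\infty=\frac1{4\pi}\int_D e^{i\kappa(d-\theta)\cdot y}\,q(y)\bigl(1+w(y;\kappa)\bigr)\,dy .
\]
Now set $\kappa=i\tau$. For backscattering we take $\theta$ and $d=-\theta$, so the phase is $e^{2\tau\theta\cdot y}$ and the integral is a generalized Fourier (Laplace) integral in direction $\xi=\theta$. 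We need to show that the amplitude $1+w(\cdot;i\tau)$ stays bounded and tends to $1$ as $\tau\to\infty$. The argument is a Neumann series for the conjugated resolvent $e^{-i\kappa y\cdot d}(\Delta+\kappa^2)^{-1}e^{i\kappa y\cdot d}$. After complex rotation this operator has norm $O(1/\tau)$ on $L^2(B_R)$, by the Sylvester--Uhlmann-type estimate for Faddeev-type Green functions, or more simply by the decay of the exponentially weighted Helmholtz resolvent.

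\textbf{Step 3 (Paley--Wiener step).} Apply the Paley--Wiener-type theorem (Theorem~\ref{thm-growth-support}). Since $q$ is real and sign-definite near $\partial D$ and the amplitude is close to $1$, the theorem should give
\[
\limsup_{\tau\to\infty}\frac{\log|u^\infty(\theta;-\theta,i\tau)|}{2\tau}=h_D(\theta).
\]
Hence the backscattering data determine $h_D$ on all of $\mathbb S^2$, and therefore $\overline D$, which proves (ii). For (i), with $d=\pm d_0$ and arbitrary $\theta$, the phase is $e^{\tau(\theta-d)\cdot y}$ with direction $\xi=\theta-d$. As $\theta$ ranges over $\mathbb S^2$, the vectors $\theta-d_0$ cover every direction except $d_0$ itself, and the vectors $\theta+d_0$ cover every direction except $-d_0$. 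Positive homogeneity gives $h_D(\theta-d)=|\theta-d|\,h_D\bigl((\theta-d)/|\theta-d|\bigr)$, so the two incident directions together give $h_D$ on all of $\mathbb S^2$. Two opposite incident directions are needed precisely because each single direction misses one direction.

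\textbf{Main obstacle.} The hardest step is the uniform control of the amplitude $1+w$ on the imaginary ray. The continuation of the solution to $\kappa=i\tau$ grows like $e^{\tau|y|}$, so the estimate must be done on the conjugated problem, and we must confirm that no poles of the meromorphic continuation lie on the ray for large $\tau$. I would first try a direct Neumann-series bound in $L^2(B_R)$ with the exponentially weighted kernel $e^{-\tau|x-y|+\tau(x-y)\cdot d}/|x-y|$. If that bound fails to decay, I would fall back on verifying only the weaker amplitude hypotheses that Theorem~\ref{thm-growth-support} actually requires, such as boundedness and a nonvanishing limit near the supporting point.
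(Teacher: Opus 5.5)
Your overall route matches the paper's: continue the data to $\kappa=i\tau$, write $u^\infty$ as a Laplace-type integral whose amplitude tends to $1$, apply Theorem~\ref{thm-growth-support} with the linear phase $\varphi(y)=(\theta-d)\cdot y$, and recover $\overline D$ from its support function. Two steps need repair.

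First, the covering claim in (i) is false.
\begin{itemize}
\item For $\theta\neq d_0$ one has $(\theta-d_0)\cdot d_0=\theta\cdot d_0-1<0$. So the directions $(\theta-d_0)/|\theta-d_0|$ fill exactly the open hemisphere $\{v\cdot d_0<0\}$ (for such $v$ take $\theta=d_0-2(v\cdot d_0)v$), not $\mathbb S^2\setminus\{d_0\}$.
\item Likewise, $\theta+d_0$ gives only $\{v\cdot d_0>0\}$.
\item The two incident directions together therefore miss the whole great circle $\{v\cdot d_0=0\}$. You must invoke continuity of the support function to reach all of $\mathbb S^2$, as the paper does (``by density'').
\end{itemize}
Your explanation of why two incident directions are needed is therefore wrong. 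If a single direction missed only one point, continuity would make one incident direction suffice. You should also exclude $\theta=d$, where $\nabla\varphi=0$ violates \eqref{PhaseFunction-20261003}.

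Second, Theorem~\ref{thm-growth-support} needs $\mathcal A(i\tau,\cdot)\to1$ strongly in $L^\infty(B_R)$. Lemma~\ref{an-unique-continu-L1-weight} uses $\|\mathcal A-1\|_{L^\infty}<1/2$ to get $\bigl|\int e^{\tau\varphi}\mathcal A f\bigr|\ge\frac12\int e^{\tau\varphi}|f|$ on the top slab of $\varphi$.
\begin{itemize}
\item A Neumann series in $L^2(B_R)$ yields only $w\to0$ in $L^2$, which does not suffice. An $L^\infty$-bounded, $L^2$-small perturbation of $\mathcal A$ supported in the cap $K\cap\{\varphi>\sup_K\varphi-1/\tau\}$ can make $F(i\tau)$ vanish.
\item The fallback hypotheses you list (boundedness, a nonvanishing limit near the supporting point) are not the theorem's hypotheses either.
\item The paper bootstraps: a uniform $L^2$ bound, then a uniform $L^\infty$ bound via H\"older with $|x-y|^{-1}\in L^2$, then a splitting over $\Omega_\delta(x)=\{|x-y|-d\cdot(x-y)\le\delta\}$, whose measure is $O(\delta)$.
\end{itemize}

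Your Neumann-series idea can instead be run directly in $L^\infty$. In polar coordinates about $d$,
\[
\sup_{x\in B_R}\int_{B_R}\frac{e^{-\tau(|x-y|-d\cdot(x-y))}}{4\pi|x-y|}\,dy\le\int_0^{2R}\frac{1-e^{-2\tau s}}{2\tau}\,ds\le\frac{R}{\tau}.
\]
Hence the conjugated operator has norm at most $R\|q\|_{L^\infty}/\tau$ on $L^\infty(B_R)$, and on $L^2(B_R)$ by Schur's test. This has two consequences:
\begin{itemize}
\item $\|w\|_{L^\infty(B_R)}=O(1/\tau)$.
\item Since $K(i\tau)$ is conjugate to this operator by multiplication by $e^{\tau x\cdot d}$, one gets $i\tau\notin\mathcal I$ for $\tau>R\|q\|_{L^\infty}$. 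This settles your concern about poles on the ray.
\end{itemize}
This is a quantitative sharpening of Theorem~\ref{lem3.3} and needs no Sylvester--Uhlmann or Faddeev machinery.
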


Next, we consider point-source incidence. Let $y \in \mathbb R^3$ and $\kappa > 0$. Consider the Schr\"odinger equation with an incident field generated by a point source at $y$:
\begin{eqnarray}\label{psiw}
    \left\{
    \begin{array}{r@{~}l}
        \big( \Delta+\kappa^2+q(x) \big)u(x)&=-\delta(x-y),  ~x \in \mathbb R^3,
        \vspace{0.5em}\\
        u&=u^{\mathrm{in}}+u^{\mathrm{sc}},
        \vspace{0.5em}\\
        u^{\mathrm{in}}(x)&=\dfrac{e^{i\kappa|x-y|}}{4\pi|x-y|}, ~ x \in \mathbb R^3\setminus \{y\},
        \vspace{0.5em}\\
        \lim\limits_{r\to+\infty}r(\partial_r-i\kappa)u^{\mathrm{sc}}&=0, ~ r=|x|.
    \end{array}
    \right.
\end{eqnarray}
Any function $u$ in the space $H^2_{\mathrm{loc}}(\mathbb R^3 \setminus \{y\}) \cap C^1\big( \mathbb R^3 \setminus (B_R \cup \{y\}) \big)$ is said to be a solution to the scattering problem (1.6) if $u$ satisfies \eqref{psiw}. By Theorem \ref{th:ps}, for each $y \in \mathbb R^3$ and $\kappa >0$, the scattering problem \eqref{psiw} has a unique solution. We denote the scattered and total fields by \(u^{\mathrm{sc}}(x; y, \kappa)\) and \(u(x;y,\kappa)\), respectively, to make their dependence on the source position \(y\) explicit. 

Since the total field $u(x;y,\kappa)$ is singular at $x = y$, we measure the scattered field $u^{\mathrm{sc}}(y;y,\kappa)$ at the source position for all $|y| = R$, and these measurements can be regarded as near-field backscattering data.

\begin{theorem}[Point source incidence]\label{TH:near} 
    Suppose that $\mathrm{Assumption}$ $\mathrm{\mathbf{(H)}}$ holds. 
Let $0 < \kappa_{\min} < \kappa_{\max} < \infty$. Then, the shape and location of $\overline{D}$ are uniquely determined by the near-field backscattering data $\big\{ u^{\mathrm{sc}}(y; y, \kappa) ~:~ |y| = R, ~\kappa \in (\kappa_{\min}, \kappa_{\max}) \big\}$. 
\end{theorem}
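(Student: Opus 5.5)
The plan is to reduce the near-field backscattering data to the support function of $\overline D$, $h_D(\hat y)=\sup_{x\in\overline D}x\cdot\hat y$. Convexity of $\overline D$ then gives uniqueness, because a compact convex set is determined by its support function. We carry this out direction by direction, for each $y=R\hat y$ with $\hat y\in\mathbb S^2$.

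\textbf{Step 1: extension to imaginary frequencies.} By the meromorphic extension of Theorem \ref{th:ps}, the map $\kappa\mapsto u^{\mathrm{sc}}(y;y,\kappa)$ is known on $(\kappa_{\min},\kappa_{\max})$. It is therefore known on its whole domain of meromorphy, which includes the positive imaginary axis $\kappa=i\tau$ with $\tau\ge\tau_0$ large. If two potentials $q_1,q_2$ give the same data, their continued data agree there.

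\textbf{Step 2: an integral representation.} For $\kappa=i\tau$ the incident field is $\Phi_\tau(x,y)=e^{-\tau|x-y|}/(4\pi|x-y|)$, which decays exponentially. For $\tau$ large the Lippmann--Schwinger equation $u=\Phi_\tau(\cdot,y)+\int\Phi_\tau(\cdot,z)q(z)u(z)\,dz$ is solvable by a Neumann series, since $\|q\|_\infty\|\Phi_\tau\|_{L^1}\lesssim\|q\|_\infty/\tau^2$. Evaluating at the source gives
\begin{equation*}
u^{\mathrm{sc}}(y;y,i\tau)=\int_{D}\frac{e^{-2\tau|x-y|}}{16\pi^2|x-y|^2}\,q(x)\,\bigl(1+w_\tau(x)\bigr)\,dx,
\end{equation*}
where the correction satisfies $\|w_\tau\|_{L^\infty(D)}=O(\tau^{-1})$. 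To get this bound, write $u=\Phi_\tau(\cdot,y)(1+w_\tau)$ and estimate the weighted Neumann series. Since $|x-y|\ge R-|x|$ on $D$, we have $|x-y|-|z-y|\le|x-z|$. Hence $\Phi_\tau(x,z)\Phi_\tau(z,y)/\Phi_\tau(x,y)$ is bounded by a kernel whose $L^1$ norm is $O(1/\tau)$ after one localization near $x$. This is the generalized Fourier (Laplace-type) integral with nontrivial amplitude $a_\tau(x)=q(x)(1+w_\tau(x))/(16\pi^2|x-y|^2)$ and phase $-2\tau|x-y|$.

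\textbf{Step 3: growth rate and the support function.} Apply Theorem~\ref{thm-growth-support} in the Laplace form, with phase $\phi(x)=|x-y|$ and a real, sign-definite amplitude near $\partial D$. It gives
\begin{equation*}
\lim_{\tau\to\infty}\frac{1}{2\tau}\log\bigl|u^{\mathrm{sc}}(y;y,i\tau)\bigr|=-\operatorname{dist}(y,\overline D).
\end{equation*}
The upper bound is immediate. The lower bound is where sign-definiteness is essential. For $\tau$ large the amplitude $q(1+w_\tau)$ keeps the sign of $q$, up to the factor $1+O(1/\tau)$, in the small region of $U$ near the closest point. Complex-valued $q$ deeper inside contributes $O(e^{-2\tau(\operatorname{dist}+\delta)})$, so no cancellation can occur at leading exponential order. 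The lower bound also needs $\int_{D\cap\{|x-y|<d+\epsilon\}}|q|>0$ for every $\epsilon$. This holds because $\overline D=\operatorname{supp}q$ with $D$ open.

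Hence the data determine $\operatorname{dist}(R\hat y,\overline D)$ for all $\hat y\in\mathbb S^2$. For convex $\overline D\subset B_R$ we have $\bigcap_{|y|=R}\{x:|x-y|\ge\operatorname{dist}(y,\overline D)\}\cap\overline{B_R}$, and this intersection contains $\overline D$. To pin down $\overline D$ exactly, note that every supporting plane of $\overline D$ touches at a point $p$. The point $y=p+t\nu\in\partial B_R$, with $\nu$ the outward normal and $t>0$, has $p$ as its nearest point in $\overline D$. Thus the ball $B(y,\operatorname{dist}(y,\overline D))$ is tangent to $\overline D$ at $p$, and the union of these open balls is exactly the complement of $\overline D$ in $B_R$, by convexity. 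So $\overline D$ equals the set of $x\in\overline{B_R}$ with $|x-y|\ge\operatorname{dist}(y,\overline D)$ for all $|y|=R$, and it is determined.

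\textbf{Main obstacle.} I expect the hard step to be the remainder estimate in Step 2. It must hold uniformly on $D$, with relative rather than absolute smallness, even though $\Phi_\tau(\cdot,y)$ varies exponentially across $D$. I would first try the triangle-inequality weighting above. If that fails, the fallback is to bound the remainder by $C\tau^{-1}e^{-2\tau\operatorname{dist}(y,\overline D)}$ in absolute terms. This is still negligible against the leading term, provided Theorem~\ref{thm-growth-support} gives a polynomial lower bound $\gtrsim\tau^{-N}e^{-2\tau\operatorname{dist}}$; the order $\tau^{-1}$ would then need improving to $\tau^{-N-1}$ by taking more Neumann terms.
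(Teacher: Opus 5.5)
Your proposal is correct and follows the paper's proof essentially step by step. You continue the data to $\kappa=i\tau$ via Theorem~\ref{th:ps}, use the relative leading-order expansion of $u^{\mathrm{sc}}(y;y,i\tau)$, and apply Theorem~\ref{thm-growth-support} with phase $-2|x-y|$ to recover $\operatorname{dist}(y,\overline D)$ for $|y|=R$; the $o(1)$ remainder of Theorem~\ref{th:ps}(iii) already suffices, so your $O(\tau^{-1})$ rate and the fallback (which would need a polynomial lower bound that Theorem~\ref{thm-growth-support} does not provide) are unnecessary, and your final reconstruction $\overline D=\{x\in\overline{B_R}:|x-y|\ge\operatorname{dist}(y,\overline D)\ \text{for all } |y|=R\}$ via metric projection is just a constructive version of the paper's separation argument in Lemma~\ref{convex-equal} (it uses the distance function on the sphere, not the support function $h_D$ announced in your opening).
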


\bigskip
\noindent\textbf{Novelties and methods.} The main novelties in this paper are summarized as follows: 
\begin{itemize}
    \item[(a1)] We develop a new mathematical method for reconstructing the convex support of a nonsmooth, complex-valued potential under both plane-wave and point-source incidence (see Theorems \ref{TH:SE} and \ref{TH:near} as well as their proofs). 
    \item[(a2)] We establish a Paley-Wiener-type theorem (Theorem \ref{thm-growth-support}) that may have further applications to other inverse scattering problems. 
\end{itemize}
The proofs of Theorems \ref{TH:SE} and \ref{TH:near} follow the same approach, which we summarize below. 
First, the total fields and the measurements are extended meromorphically to complex wavenumbers (see Theorems \ref{th:sw} and \ref{th:ps}). Then, we establish the leading asymptotic expansions of the total fields and the measurements along the imaginary axis of complex wavenumbers (see Theorems \ref{lem3.3} and \ref{th:ps}). Finally, applying the Paley-Wiener-type theorem (Theorem \ref{thm-growth-support}) to the corresponding integral representations yields information about the support of the potential to be recovered. 

\bigskip
\noindent\textbf{Organization.}
The paper is structured as follows: Section \ref{sec:Prelimilary lemmas} presents some key lemmas, including the Paley-Wiener-type theorem (Theorem \ref{thm-growth-support}). Sections \ref{sec:proof-of-theorem-se} and \ref{sec:proof of theorem near field} contain the uniqueness proofs for Theorems \ref{TH:SE} and \ref{TH:near}, respectively. Finally, Section \ref{sec:Concluding remarks} presents concluding remarks and directions for future work.

\section{A Paley-Wiener-Type Theorem}
\label{sec:Prelimilary lemmas}

The celebrated Paley-Wiener theorem characterizes the support of a compactly supported distribution in terms of the growth of its Fourier transform. There are various versions of the Paley-Wiener theorem. The following version is taken from \cite[Theorem 7.3.1, p. 181]{Hormander}; see also \cite[Chapter III D, pp.~30--37]{PK} for the one-dimensional case.

\begin{proposition}[Paley-Wiener theorem]\label{PW}
Let $K$ be a convex compact subset of $\mathbb{R}^n$ with supporting function $H$ (i.e., $H(\xi) := \displaystyle\sup_{x\in K} x \cdot \xi$, $\xi \in \mathbb R^n$). Let  $u$ be a distribution compactly supported in $\mathbb R^n$. 
Then, u is supported in $K$ if and only if for some $C>0$ and $N \in \mathbb N$,
$$
|\hat{u}(\zeta)|\leq C(1+|\zeta|)^Ne^{H(\operatorname{Im}\zeta)},
\quad \zeta\in\mathbb{C}^n,
$$
where $\hat u$ denotes the Fourier transform of $u$ (with the extension to be a function over \(\mathbb C^n\)). 
\end{proposition}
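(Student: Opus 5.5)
We prove the two directions of Proposition~\ref{PW} separately, using the convention $\hat u(\zeta)=\langle u, e^{-ix\cdot\zeta}\rangle$. With this sign, $|e^{-ix\cdot\zeta}|=e^{x\cdot\operatorname{Im}\zeta}$, and this matches the exponent $H(\operatorname{Im}\zeta)$ in the statement.

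\emph{Necessity.} Assume $\operatorname{supp}u\subset K$. Since $u$ has compact support, it has finite order $N$. For $\varepsilon>0$, choose a cutoff $\chi_\varepsilon$ with the following properties:
\begin{itemize}
\item $\chi_\varepsilon=1$ near $K$;
\item $\operatorname{supp}\chi_\varepsilon\subset K_\varepsilon:=K+\overline{B_\varepsilon}$;
\item $|\partial^\alpha\chi_\varepsilon|\le C_\alpha\varepsilon^{-|\alpha|}$.
\end{itemize}
Then $\hat u(\zeta)=\langle u,\chi_\varepsilon e^{-ix\cdot\zeta}\rangle$. Estimating $u$ by a finite sum of sup norms of derivatives up to order $N$ gives
\[
|\hat u(\zeta)|\le C\sum_{|\alpha|\le N}\varepsilon^{|\alpha|-N}|\zeta|^{N-|\alpha|}\,e^{H(\operatorname{Im}\zeta)+\varepsilon|\operatorname{Im}\zeta|}.
\]
Here we used that the supporting function of $K_\varepsilon$ is $H(\xi)+\varepsilon|\xi|$. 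Taking $\varepsilon=1/(1+|\zeta|)$ makes the extra exponential factor bounded and each polynomial factor at most $(1+|\zeta|)^N$. This gives the claimed bound.

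\emph{Sufficiency.} Assume the bound holds. The first step is to regularize: set $u_\delta=u*\phi_\delta$ with a mollifier $\phi_\delta$ supported in $B_\delta$. Necessity applied to $\phi_\delta$ (which lies in $C_c^\infty$, so every polynomial weight is available) gives
\[
|\hat u_\delta(\zeta)|\le C_M(1+|\zeta|)^{-M}e^{H(\operatorname{Im}\zeta)+\delta|\operatorname{Im}\zeta|}\quad\text{for all }M.
\]
Since $u_\delta\in C_c^\infty$, Fourier inversion gives
\[
u_\delta(x)=(2\pi)^{-n}\int_{\mathbb R^n}\hat u_\delta(\xi)e^{ix\cdot\xi}\,d\xi.
\]
The decay in $\operatorname{Re}\zeta$ together with Cauchy's theorem lets us shift the contour to $\xi+i t\eta$ for any fixed $\eta$ and $t>0$. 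This yields
\[
|u_\delta(x)|\le C e^{t(H(\eta)+\delta|\eta|-x\cdot\eta)}.
\]
Now fix $x\notin K_\delta$. By the Hahn--Banach separation theorem, which is where convexity of $K$ enters, there is a unit $\eta$ with $x\cdot\eta>H(\eta)+\delta$. Letting $t\to\infty$ gives $u_\delta(x)=0$. Hence $\operatorname{supp}u_\delta\subset K_\delta$. Since $u_\delta\to u$ in distributions as $\delta\to0$ and $\bigcap_\delta K_\delta=K$, we conclude $\operatorname{supp}u\subset K$.

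\emph{Main obstacle.} The delicate point is justifying the contour shift in $\mathbb C^n$. This is done one variable at a time, by integrating over a rectangle in the direction $\eta$ and using the rapid decay in $|\operatorname{Re}\zeta|$ to kill the side contributions. Rapid decay is exactly why one mollifies first: the polynomially growing bound on $\hat u$ alone would not justify the shift. One should also note that the hypothesis is applied to $u$, which is assumed compactly supported, so $\hat u$ is entire. This fact, from the necessity argument with a large ball in place of $K$, is what makes the contour deformation legitimate.
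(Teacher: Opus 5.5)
Your proof is correct and follows essentially the standard argument of H\"ormander's Theorem 7.3.1, which the paper cites instead of proving: a cutoff $\chi_\varepsilon$ with $\varepsilon\sim 1/|\zeta|$ for necessity, and mollification, a contour shift and convex separation for sufficiency. Because the statement already assumes $u$ is compactly supported, you rightly skip H\"ormander's extra step of constructing $u$ from an entire function. One small slip: the terms of $\partial^\alpha(\chi_\varepsilon e^{-ix\cdot\zeta})$ are $\varepsilon^{-j}|\zeta|^k$ with $j+k\le N$, so your sum should read $\sum_{j\le N}\varepsilon^{-j}(1+|\zeta|)^{N-j}$ rather than $\sum_{|\alpha|\le N}\varepsilon^{|\alpha|-N}|\zeta|^{N-|\alpha|}$; with this correction the choice $\varepsilon=1/(1+|\zeta|)$ gives exactly $(1+|\zeta|)^N$.
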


For a compactly supported function \(u\in L^2(\mathbb R^n)\), Proposition \ref{PW} yields the following characterization of the supporting hyperplane in a direction \(e\in\mathbb{S}^{n-1}\) in terms of the growth of $\hat u$ along the lines \(\xi+i\eta e\), with \(\xi\in\mathbb R^n\):   
\begin{equation}\label{growth-pw}
\sup_{x\in\operatorname{supp}u}x\cdot e
=
\sup_{\xi\in\mathbb R^n}
\limsup_{\eta\to+\infty}
\frac{\ln|\hat u(\xi+i\eta e)|}{\eta}.
\end{equation}
For a proof of \eqref{growth-pw}, see Proposition \ref{ndpw} in Appendix \ref{sec:proof-of-growth-pw}.

In this paper, we need a Paley-Wiener-type theorem for integral operators of the following form: 
\begin{eqnarray}\label{FourierIntegralOperator-202601002}
    F(k) := \int_{\mathbb R^n} e^{-i k \varphi(x) } \mathcal A(k,x) f(x) dx, ~ k \in \mathbb C \text{ with }  \operatorname{Im} k \gg 1,
\end{eqnarray}
where $\varphi$ and $\mathcal A$ are suitable phase and amplitude functions, respectively. This includes the Fourier transform as a special case (where  $\mathcal A \equiv 1$ and $\varphi(x)=x \cdot \xi$ with $\xi \in \mathbb R^n$). 
The aforementioned generalized result is stated below. 

\begin{theorem}
    \label{thm-growth-support}
    Let  $M,r>0$. Let $\varphi \in L^1_{\mathrm{loc}}(\mathbb R^n) \cap C^1(B_r)$ be a real-valued function satisfying
    \begin{align}\label{PhaseFunction-20261003}
        \inf_{x \in B_r}  | \nabla \varphi(x) | > 0.
    \end{align}
Take a bounded function $\mathcal{A}$ from $\mathbb{C}_{M}^{+}$ to $L^{\infty}(\mathbb{R}^{n})$ (with $\C_{M}^+:= \big\{ k\in \C ~:~ \operatorname{Im} k \geq M \big\}$) such that
    \begin{eqnarray}
        \label{1020-1}
        \lim_{\eta \rightarrow+\infty} \mathcal A(i\eta, \cdot)|_{ B_r }=1
        \;\;\mbox{strongly in}\;\; L^\infty( B_r ).
    \end{eqnarray}
    Let $f\in L^1(\mathbb R^n) \setminus\{0\}$  with $K := \mathrm{supp}(f)\subset B_r$. 
    Assume that $f$ has a fixed sign in a neighborhood of $\partial K$ (i.e., either $f\geq 0$ around $\partial K$ or $f\leq 0$ around $\partial K$). Then, it holds that 
    \begin{eqnarray*}
        \sup_{ x \in \mathrm{supp}(f) }  \varphi(x)
        = \lim_{\eta\rightarrow+\infty} \frac{ \ln |F(i\eta)|}{ \eta },
    \end{eqnarray*}
    where $F$ is given by \eqref{FourierIntegralOperator-202601002}.
\end{theorem}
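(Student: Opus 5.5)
Set $\Phi:=\sup_{x\in K}\varphi(x)$. Note that
\[
F(i\eta)=\int_K e^{\eta\varphi(x)}\mathcal A(i\eta,x)f(x)\,dx ,
\]
so the statement amounts to a two-sided estimate $\ln|F(i\eta)|/\eta\to\Phi$. We would prove the upper bound and the lower bound separately. (Here $\varphi$ is continuous on $B_r\supset K$ and $K$ is compact, so the supremum is a maximum and $\Phi$ is finite; the ess sup over $K$ is the same thing, because $\varphi$ is continuous and every point of $K=\operatorname{supp} f$ is a density point of $\{f\neq0\}$ in the measure-theoretic sense.)

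\emph{Upper bound.} Since $\mathcal A$ is bounded, we get directly
\[
|F(i\eta)|\le \sup_{k}\|\mathcal A(k,\cdot)\|_{L^\infty}\, e^{\eta\Phi}\,\|f\|_{L^1}.
\]
Hence $\limsup_{\eta\to\infty}\eta^{-1}\ln|F(i\eta)|\le\Phi$.

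\emph{Lower bound.} This is the main step, and it is where the sign condition comes in. The difficulty is that cancellation in the integral could make $F$ much smaller than $e^{\eta\Phi}$.
\begin{itemize}
\item Fix $\varepsilon>0$. Since $\varphi$ is continuous and $K$ is compact, the superlevel set $S_\varepsilon:=\{x\in K:\varphi(x)>\Phi-\varepsilon\}$ is relatively open in $K$.
\item We claim $S_\varepsilon$ lies inside the neighborhood $V$ of $\partial K$ on which $f$ has a fixed sign, say $f\ge0$, once $\varepsilon$ is small. The reason is that $\varphi$ has no interior maxima: $|\nabla\varphi|>0$ on $B_r$ by \eqref{PhaseFunction-20261003}. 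So $\Phi$ is attained only on $\partial K$. By compactness, $\{x\in K:\varphi(x)\ge \Phi-\varepsilon\}\subset V$ for $\varepsilon$ small; otherwise a limit point would be a maximizer lying in $K\setminus V$, i.e.\ in the interior of $K$, a contradiction.
\item Split $F(i\eta)=I_1+I_2$, with $I_1$ the integral over $S_{\varepsilon}$ and $I_2$ the integral over $K\setminus S_\varepsilon$.
\item For $I_2$ we have the trivial bound $|I_2|\le Ce^{\eta(\Phi-\varepsilon)}$.
\item For $I_1$, write $\mathcal A=1+(\mathcal A-1)$. By \eqref{1020-1}, $\|\mathcal A(i\eta,\cdot)-1\|_{L^\infty(B_r)}\le 1/2$ for large $\eta$. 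Then $\operatorname{Re}(\pm I_1)\ge \frac12\int_{S_{\varepsilon}}e^{\eta\varphi}|f|\,dx$, because $f$ has one sign on $S_\varepsilon$. This is the only place the sign condition is used, and it is exactly what rules out cancellation.
\item Now fix $\delta\in(0,\varepsilon)$. The set $S_\delta$ has positive $|f|\,dx$-measure: it is a relatively open subset of $\operatorname{supp} f$ containing a maximizer. Hence $\int_{S_\varepsilon}e^{\eta\varphi}|f|\ge c_\delta e^{\eta(\Phi-\delta)}$.
\end{itemize}
Combining these, $|F(i\eta)|\ge \tfrac12 c_\delta e^{\eta(\Phi-\delta)}-Ce^{\eta(\Phi-\varepsilon)}$. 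Therefore $\liminf_{\eta\to\infty}\eta^{-1}\ln|F(i\eta)|\ge\Phi-\delta$. Letting $\delta\to0$ and combining with the upper bound, the limit exists and equals $\Phi$.

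The step I expect to be the most delicate is the one confining near-maximal points to the sign region $V$. It needs the interplay of three facts: the nondegeneracy $|\nabla\varphi|>0$ on $B_r$ (rather than merely on $K$), the compactness of $K\subset B_r$, and the positivity of the $|f|$-mass near a maximizer. The latter relies on $K$ being the essential support of $f$, so that every relatively open neighborhood in $K$ of a point of $K$ carries nonzero mass of $f$.
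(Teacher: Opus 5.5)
Your proposal is correct and follows essentially the same route as the paper. Both proofs take the trivial upper bound, then split off the layer where $\varphi>\Phi-\varepsilon$. On that layer the sign condition together with $\|\mathcal A(i\eta,\cdot)-1\|_{L^\infty(B_r)}\le 1/2$ rules out cancellation, and the positive $|f|$-mass near a maximizer dominates the $\mathcal O(e^{\eta(\Phi-\varepsilon)})$ remainder. Your $\delta<\varepsilon$ device replaces the paper's two-case Lemma~\ref{an-unique-continu-L1-weight}. Your compactness argument (maximizers of $\varphi$ on $K$ lie on $\partial K$ because $\nabla\varphi\neq 0$) is a cleaner justification of the paper's informal ``nearby level sets are close'' step.

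One wording fix in your parenthetical: points of $\operatorname{supp} f$ need not be Lebesgue density points of $\{f\neq 0\}$. What is true, and what you actually use later, is that every open neighborhood of a point of $K$ carries positive $|f|\,dx$-mass.
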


\begin{remark}\label{rmk:comparison}
The above Paley-Wiener-type theorem (Theorem \ref{thm-growth-support}) extends the classical version in the following two respects:
\begin{enumerate}    
    \item \textbf{Phase and amplitude functions:} 
    While the classical theorem considers the standard Fourier transform with the kernel $e^{-i x \cdot \xi}$, our result incorporates a phase function $\varphi$ and an amplitude function $\mathcal A$ satisfying \eqref{PhaseFunction-20261003} and \eqref{1020-1}, respectively. This allows the analysis of more general integral operators beyond the classical Fourier transform.
    
    \item \textbf{Asymptotic behavior:}
    In the classical identity \eqref{growth-pw}, one needs to take the supremum over all $\xi\in\mathbb R^n$, i.e. the growth of $\hat u$ is considered along all lines $\xi+i\eta e$ with arbitrary real part $\xi$. In contrast, Theorem \ref{thm-growth-support} uses only the purely imaginary axis $k=i\eta$, and no additional supremum over $\xi\in\mathbb R^n$ is required. In addition, our result establishes the \textit{existence of the limit} (not just limsup) under the additional fixed-sign condition near the support boundary, which compensates for the role of the generalized amplitude.
\end{enumerate}
\end{remark}

To prove Theorem \ref{thm-growth-support}, we need the following auxiliary result. 

\begin{lemma}\label{an-unique-continu-L1-weight}
    Suppose that $\varphi$, $\mathcal A$, and $f$ satisfy the same hypotheses of Theorem \ref{thm-growth-support}. Write $\tau := \displaystyle\sup_{ x\in \mathrm{supp}(f)}  \varphi(x)$.  
Then, it holds that 
\begin{eqnarray}\label{bdd-0830-2}
  \liminf_{\eta\rightarrow+\infty} \Big| 
      e^{-\eta(\tau-\varepsilon)} 
      \int_{ \{ \tau -\varepsilon < \varphi(x) \leq \tau \} } 
      e^{\eta \varphi(x) } \mathcal A(i\eta, x) f(x) dx 
  \Big| 
  =+\infty
  ~\text{ for each }~
  \varepsilon > 0.
\end{eqnarray}
\end{lemma}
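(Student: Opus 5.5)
We prove \eqref{bdd-0830-2} by exhibiting, inside the slab $\{\tau-\varepsilon<\varphi\le\tau\}$, a set of positive measure on which the integrand has a fixed sign and where $\varphi$ stays strictly above $\tau-\varepsilon$. The resulting lower bound grows exponentially faster than $e^{\eta(\tau-\varepsilon)}$. Throughout we assume, as we may, that $\varepsilon$ is small, since shrinking the slab only makes the claim harder. We also assume without loss of generality that $f\ge 0$ on a neighbourhood $V$ of $\partial K$, replacing $f$ by $-f$ otherwise.

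The first step is geometric. Since $\varphi\in C^1(B_r)$ is continuous on a neighbourhood of the compact set $K\subset B_r$, the supremum $\tau$ is attained at some $x_0\in K$. Because $|\nabla\varphi|>0$ on $B_r$, $x_0$ cannot be an interior point of $K$: otherwise it would be a local maximum of $\varphi$ in the open set $\operatorname{int}K$, forcing $\nabla\varphi(x_0)=0$. Hence $x_0\in\partial K$. Here $K=\operatorname{supp} f$ is the essential support, so every ball around $x_0$ meets $\{f\neq 0\}$ in positive measure. Choose $\delta>0$ so small that $B_\delta(x_0)\subset V\cap B_r$ and $\varphi>\tau-\varepsilon/2$ on $B_\delta(x_0)$. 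On $B_\delta(x_0)$ we then have $f\ge0$, and the set $E:=B_\delta(x_0)\cap\{f>0\}$ has positive measure. Moreover $E$ lies in the slab, since $\varphi\le\tau$ a.e. on $K$ and $f=0$ off $K$.

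The second step is the splitting. Write the slab as $S=S_1\cup S_2$ with $S_1:=S\cap\{\varphi>\tau-\varepsilon/2\}\cap V$ and $S_2:=S\setminus S_1$. For large $\eta$, \eqref{1020-1} gives $\|\mathcal A(i\eta,\cdot)-1\|_{L^\infty(B_r)}\le 1/2$, so $\operatorname{Re}\mathcal A(i\eta,x)\ge 1/2$ on $B_r\supset K$.
\begin{itemize}
\item On $S_1$ we have $f\ge0$ a.e. Hence the real part of $\int_{S_1}e^{\eta\varphi}\mathcal A f\,dx$ is at least $\tfrac12\int_{S_1}e^{\eta\varphi}f\,dx$.
\item Since $E\subset S_1$, this is in turn at least $\tfrac12 e^{\eta(\tau-\varepsilon/2)}\int_E f\,dx$. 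Call the constant $c:=\int_E f\,dx>0$.
\item On $S_2$ the integrand must be handled separately, and this is the main obstacle. Points of $S_2$ either have $\varphi\in(\tau-\varepsilon,\tau-\varepsilon/2]$ or lie outside $V$.
\item The first kind contribute at most $C e^{\eta(\tau-\varepsilon/2)}\|f\|_{L^1}$, which has the same exponential order as the main term, so a crude bound does not suffice.
\item To fix this, shrink the level. Replace $\varepsilon/2$ by a threshold $\tau-\sigma$, choosing $\sigma<\varepsilon$ so small that $\{x\in K:\varphi(x)>\tau-\sigma\}\subset V$. This is possible by compactness, since $\{\varphi\ge \tau-\sigma\}\cap K$ shrinks to the maximizer set, which lies in $\partial K\subset V$ by the argument above.
\item Then let $S_1:=S\cap\{\varphi>\tau-\sigma\}$, on which $f\ge0$ holds. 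Pick $E\subset S_1$ with $\varphi>\tau-\sigma/2$, so the main term is at least $\tfrac c2 e^{\eta(\tau-\sigma/2)}$.
\item Now $S_2=S\cap\{\varphi\le\tau-\sigma\}$ contributes at most $\sup\|\mathcal A\|\,\|f\|_{L^1}e^{\eta(\tau-\sigma)}$. This is $o\!\left(e^{\eta(\tau-\sigma/2)}\right)$.
\end{itemize}

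For the conclusion, combine the two pieces:
$$\Big|e^{-\eta(\tau-\varepsilon)}\int_S e^{\eta\varphi}\mathcal A f\,dx\Big|\ge e^{\eta(\varepsilon-\sigma/2)}\Big(\tfrac c2-C e^{-\eta\sigma/2}\Big)\to+\infty,$$
since $\varepsilon>\sigma/2$. This proves \eqref{bdd-0830-2}. The delicate point is the compactness argument ensuring that near-maximal level sets of $\varphi$ on $K$ lie inside the sign-definite neighbourhood $V$. It relies on $\nabla\varphi\ne0$ pushing all maximizers to $\partial K$, and on upper semicontinuity to control the sets $\{\varphi\ge\tau-\sigma\}\cap K$.
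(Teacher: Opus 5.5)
Your argument is correct and follows essentially the same route as the paper. You isolate a top slab $\{\tau-\sigma<\varphi\le\tau\}$ on which $f$ has a fixed sign, bound that piece below using $\|\mathcal A(i\eta,\cdot)-1\|_{L^\infty(B_r)}\le 1/2$, and absorb the rest of the slab with the crude bound $Ce^{\eta(\tau-\sigma)}$. The paper instead splits into the cases $\varepsilon\le\varepsilon_0$ and $\varepsilon>\varepsilon_0$, and obtains divergence by monotone convergence rather than through your explicit set $E$; your opening ``without loss of generality $\varepsilon$ small'' is unnecessary, since your final estimate already holds for every $\varepsilon>0$. Your justification of the sign-definite slab (maximizers of $\varphi$ on $K$ lie on $\partial K$ because $\nabla\varphi\neq 0$, then compactness of $K\setminus V$) is a more rigorous version of the paper's brief appeal to nearby level sets.
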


\begin{proof}
    First of all, it follows from \eqref{PhaseFunction-20261003} that when $|c_1- c_2| \ll 1$, the two level sets $\{ \varphi = c_1 \}$ and $\{ \varphi = c_2 \}$ restricted in $B_r$ are very close to each other.  Since $f$ keeps a uniform sign  in a  neighborhood of  the boundary of supp$(f)$ with $\tau= \displaystyle\sup_{ x\in \mathrm{supp}(f)}  \varphi(x)$, we determine that for some sufficiently small $\varepsilon_0  > 0$, 
    \begin{align*}
    \text{either } f \geq 0 \text{ on } \{ \tau -\varepsilon_0 < \varphi(x) \leq \tau \}
    ~\text{ or }~
    f \leq 0 \text{ on } \{ \tau -\varepsilon_0 < \varphi(x) \leq \tau \}.
\end{align*}
Furthermore, \(f\) is not identically zero on \(\{ \tau -\delta < \varphi(x) \leq \tau \}\) for any \(\delta \in (0, \varepsilon_0]\); otherwise, this would contradict the definition of \(\tau\) as the supremum of \(\varphi\) on \(\operatorname{supp}(f)\). 
Without loss of generality, suppose that 
\begin{eqnarray}\label{1045-1}
f \geq 0 \text{ on } \{ \tau -\varepsilon_0 < \varphi(x) \leq \tau \}
~\text{ or }~
\int_{ \{ \tau -\delta < \varphi(x) \leq \tau \} }    |f(x)| dx > 0
~\text{ for each }
\delta \in (0,\varepsilon_0].
\end{eqnarray}
By \eqref{1020-1}, we can choose $\eta>0$ sufficiently large such that
\ben
\| \mathcal A(i\eta, \cdot) - 1 \|_{L^\infty( B_r )} < 1/2. 
\enn
Fix an arbitrary \(\varepsilon\in(0,\varepsilon_{0}]\). Using the first inequality in \eqref{1045-1}, and the preceding bound on \(\mathcal{A}(i\eta,\cdot)-1\), we obtain, for sufficiently large \(\eta>0\),
\begin{eqnarray*}
&&\Big| \int_{ \{ \tau -\varepsilon < \varphi(x) \leq \tau \} } 
e^{\eta \varphi(x) } \mathcal A(i\eta, x) f(x) dx \Big|
= \Big| 
\int_{ \{ \tau -\varepsilon < \varphi(x) \leq \tau \} } 
e^{\eta \varphi(x) } \big( 1 + ( \mathcal A(i\eta,x) - 1 ) \big) f(x) dx
 \Big|
 \\
&\geq&  \Big| 
\int_{ \{ \tau -\varepsilon < \varphi(x) \leq \tau \} }     e^{\eta \varphi(x) }   f(x) dx
\Big|
-  \| \mathcal A(i\eta,\cdot) - 1 \|_{L^\infty( B_r )}
\int_{ \{ \tau -\varepsilon < \varphi(x) \leq \tau \} }    e^{\eta \varphi(x)}  |f(x)| dx
\\
&\geq& 
\frac{1}{2}  
\int_{ \{ \tau -\varepsilon < \varphi(x) \leq \tau \} }     e^{\eta \varphi(x) }   |f(x)| dx
= \frac{1}{2} e^{\eta(\tau-\varepsilon)}
\int_{ \{ \tau -\varepsilon < \varphi(x) \leq \tau \} }     e^{\eta ( \varphi(x) - (\tau-\varepsilon) ) }   |f(x)| dx.
\end{eqnarray*}
This, together with the second inequality in \eqref{1045-1}, yields
\begin{align}\label{LowerBoundOfIntegral-SmallEpsilon}
    &\liminf_{\eta\rightarrow+\infty} \Big| e^{-\eta(\tau-\varepsilon)} 
    \int_{ \{ \tau -\varepsilon < \varphi(x) \leq \tau \} } 
    e^{\eta \varphi(x) } \mathcal A(i\eta, x) f(x) dx
    \Big|
    \nonumber\\
    \geq& \frac{1}{2} \liminf_{\eta\rightarrow+\infty}  
    \int_{ \{ \tau -\varepsilon < \varphi(x) \leq \tau \} }     e^{\eta ( \varphi(x) - (\tau-\varepsilon) ) }   |f(x)| dx
    = + \infty,
\end{align}
which proves \eqref{bdd-0830-2} when $\varepsilon \in (0, \varepsilon_0]$. 

When $\varepsilon  > \varepsilon_0$, observe that
\begin{align*}
    &\Big| \int_{ \{ \tau -\varepsilon < \varphi(x) \leq \tau \} } 
    e^{\eta \varphi(x) } \mathcal A(i\eta, x) f(x) dx \Big|
    = \Big| \int_{ \{ \tau -\varepsilon_0 < \varphi(x) \leq \tau \} }  \cdots dx + \int_{ \{ \tau -\varepsilon < \varphi(x) \leq \tau -\varepsilon_0 \} }  \cdots dx \Big|
    \nonumber\\
    \geq&  \Big| \int_{ \{ \tau -\varepsilon_0 < \varphi(x) \leq \tau \} } 
    e^{\eta \varphi(x) } \mathcal A(i\eta, x) f(x) dx \Big|
    - e^{\eta (\tau-\varepsilon_0)}  \| \mathcal A(i\eta, \cdot)  \|_{L^\infty( B_r )}  
    \| f\|_{L^1( B_r )}.
\end{align*}
Then, we  obtain
\begin{align*}
     \Big| \int_{ \{ \tau -\varepsilon < \varphi(x) \leq \tau \} }  e^{\eta \varphi(x)} \mathcal A(i\eta,x) f(x) dx \Big|
     \geq&  e^{\eta (\tau-\varepsilon_0)}  \bigg(
     e^{-\eta(\tau-\varepsilon_0)} 
     \int_{ \{ \tau -\varepsilon_0 < \varphi(x) \leq \tau \} } 
     e^{\eta \varphi(x) } \mathcal A(i\eta, x) f(x) dx
     \nonumber\\
     &      - \| \mathcal A( i\eta,\cdot)  \|_{L^\infty(B_r)}  
     \| f\|_{L^1(B_r)}
     \bigg).
\end{align*}
This, along with \eqref{LowerBoundOfIntegral-SmallEpsilon} (with $\varepsilon$ replaced by $\varepsilon_0$),  yields \eqref{bdd-0830-2} when $\varepsilon > \varepsilon_0$. 
This completes the proof. 
\end{proof}

We are now in a position to prove Theorem \ref{thm-growth-support}. 

\begin{proof}[Proof of Theorem \ref{thm-growth-support}]
By the definition of $F$ in \eqref{FourierIntegralOperator-202601002}, for sufficiently large $\eta$,
\begin{eqnarray*}
 |F(i\eta)| &\leq&  \int_{ \mathrm{supp}(f) } e^{\eta \varphi(x) } |\mathcal{A}(i\eta, x)| \, |f(x)| dx
   \leq  \exp\Big( \eta \sup_{ x\in \mathrm{supp}(f)} \varphi(x)  \Big) \|\mathcal{A}(i\eta, \cdot)\|_{L^\infty( B_r ) } \|f\|_{L^1( B_r )},
\end{eqnarray*}
which, together with \eqref{1020-1}, implies 
\begin{eqnarray}\label{0954-2}
 \limsup_{\eta\rightarrow+\infty} \Big( \frac{1}{\eta} \ln |F(i\eta)| \Big)
 \leq \sup_{ x\in \mathrm{supp}(f)}  \varphi(x).
\end{eqnarray}

Next, fix an arbitrary $\varepsilon>0$. Write $\tau := \displaystyle\sup_{ x\in \mathrm{supp}(f)}  \varphi(x)$. 
Since supp$(f)\subset \{ \varphi(x) \leq \tau\} \cap B_r$, we get from the definition of $F$ in \eqref{FourierIntegralOperator-202601002} that for each $\eta \geq M$,
\begin{eqnarray*}
 |F(i\eta)| &=& \Big| \int_{\mathbb R^n} e^{\eta \varphi(x)} \mathcal{A}(i\eta, x) f(x) dx \Big|
 = \Big| \int_{ \{ \varphi(x) \leq \tau\} } e^{\eta \varphi(x) } \mathcal{A}(i\eta, x) f(x) dx \Big|
 \\
 &\geq& \Big| \int_{ \{ \tau - \varepsilon < \varphi(x) \leq \tau\} } e^{\eta \varphi(x) } \mathcal{A}(i\eta, x) f(x) dx \Big|
 - \int_{ \{ \varphi(x) \leq \tau - \varepsilon \} } e^{\eta \varphi(x) }  |\mathcal{A}(i\eta, x)|  \,  |f(x)| dx 
 \\
 &\geq& e^{\eta(\tau-\varepsilon)} \Big( e^{-\eta(\tau-\varepsilon)}
 \Big| 
 \int_{ \{ \tau - \varepsilon < \varphi(x) \leq \tau\} } e^{\eta \varphi(x) } \mathcal{A}(i\eta, x) f(x) dx
\Big|
- \|\mathcal{A}(i\eta, \cdot)\|_{L^\infty( B_r )} \|f\|_{L^1( B_r )} \Big).
\end{eqnarray*}
Meanwhile, by Lemma \ref{an-unique-continu-L1-weight}, we have
\begin{eqnarray*}
    \liminf_{\eta\rightarrow+\infty}  \left[
    e^{-\eta(\tau-\varepsilon)}
     \Big| 
    \int_{ \{ \tau - \varepsilon < \varphi(x) \leq \tau\} } e^{\eta \varphi(x) } \mathcal{A}(i\eta, x) f(x) dx
    \Big|  
    \right]
    = +\infty.
\end{eqnarray*}
Then, the last two estimates, along with \eqref{1020-1}, imply
\begin{eqnarray*}
 \liminf_{\eta\rightarrow+\infty} \Big( \frac{1}{\eta} \ln |F(i\eta)| \Big)
 \geq \liminf_{\eta\rightarrow+\infty} \Big( \frac{1}{\eta} \ln e^{\eta(\tau-\varepsilon)} \Big)
 = \tau-\varepsilon.
\end{eqnarray*}
Since \(\varepsilon>0\) is arbitrary, we obtain
\begin{eqnarray*}
 \liminf_{\eta\rightarrow+\infty}  \Big(  \frac{1}{\eta} \ln |F(i\eta)|  \Big)
 \geq \tau =  \sup_{ x\in \mathrm{supp}(f)}  \varphi(x), 
\end{eqnarray*}
which, together with \eqref{0954-2}, gives the desired conclusion. This completes the proof. 
\end{proof}

\section{Proof of Theorem \ref{TH:SE}}
\label{sec:proof-of-theorem-se}

This section is devoted to the proof of Theorem \ref{TH:SE}, which establishes the unique determination of the support of the potential from multi-frequency far-field measurements under plane-wave incidence. The proof proceeds in three steps. 
First, in Subsection \ref{meromorphic-ext-subsec} we investigate the meromorphic extension of the scattering solutions  to complex wavenumbers. Theorem \ref{th:sw} establishes the well-posedness of the forward scattering problem for positive real wavenumbers and complex wavenumbers with large imaginary parts, and the meromorphic extension of the solution operator to the complex plane except for a discrete set of exceptional points.
Second, in Subsection \ref{high-freq-subsec} we derive high-frequency asymptotic formulas for the scattering solutions and far-field patterns when the wavenumber approaches infinity along the imaginary axis. Theorem \ref{lem3.3} provides the asymptotic behavior of the total field and the far-field pattern in this regime.
Finally, in Subsection \ref{proof-thm-subsec} we combine these analytical tools to prove the uniqueness result under two different measurement configurations: fixed-angle far-field data and backscattering far-field data. The proof strategy relies on the Paley-Wiener-type theorem (Theorem \ref{thm-growth-support}) to extract support information from the high-frequency behavior of the far-field patterns.

\subsection{Meromorphic extension to complex wavenumbers}
\label{meromorphic-ext-subsec}

In this subsection, we extend the solutions of the scattering problem \eqref{eqs}-\eqref{eq:radiation} to the setting of complex wavenumbers $\kappa \in \mathbb C$. The main result is stated as follows. 

\begin{theorem}\label{th:sw}
    Let  $q\in L^\infty(\mathbb{R}^3)$ satisfy supp$(q)\subset B_R$ and $\operatorname{Im} q\ge 0$.     Let $d\in \s^2$. 
    Then, for each $\kappa >0$, the scattering problem \eqref{eqs}-\eqref{eq:radiation} admits a unique solution $u(\cdot; d, \kappa)$ in the space $ H^2_{\mathrm{loc}}(\R^3)\cap C^1\big( \mathbb R^3 \backslash B_R \big)$. 
    
    Furthermore, when regarded as a function of $\kappa$, the solution \(u(\cdot; d, \kappa)\) admits a unique meromorphic extension from \(\mathbb C\) to \(H^2_{\mathrm{loc}}(\R^3)\), with possible poles in a discrete set $\mathcal{I}$, independent of \(d \in \mathbb S^2\), satisfying that
    \begin{align}\label{DiscreteWaveNumbers-20261004}
        \mathcal{I}\cap  \big\{ z \in \mathbb R ~:~ z > 0 \big\}=\emptyset
        ~\text{ and }~
        \mathcal{I}\cap \big\{ z \in \mathbb C ~:~ \operatorname{Im} z > M \big\}=\emptyset
        \text{ for some }  M >0. 
    \end{align}
\end{theorem}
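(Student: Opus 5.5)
The plan is to reformulate the scattering problem as a Lippmann--Schwinger equation on a bounded domain and then apply the analytic Fredholm theorem. Let $\Phi_\kappa(x,y)=e^{i\kappa|x-y|}/(4\pi|x-y|)$ and define the volume potential
\begin{equation*}
(V_\kappa g)(x):=\int_{B_R}\Phi_\kappa(x,y)\,q(y)\,g(y)\,dy .
\end{equation*}
For $\kappa>0$, a function $u\in H^2_{\mathrm{loc}}\cap C^1(\mathbb R^3\setminus B_R)$ solves \eqref{eqs}--\eqref{eq:radiation} if and only if $u|_{B_R}\in L^2(B_R)$ solves
\begin{equation*}
u-V_\kappa u=e^{i\kappa x\cdot d}.
\end{equation*}
This is the standard argument via Green's representation and the radiation condition. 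Conversely, $u$ extends to $\mathbb R^3$ by the same formula, lies in $H^2_{\mathrm{loc}}$ by mapping properties of the Newton potential, and is smooth outside $B_R$.

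The kernel $\Phi_\kappa$ is entire in $\kappa$ for $x\neq y$, and
\begin{equation*}
|\Phi_\kappa(x,y)|\le \frac{e^{|\operatorname{Im}\kappa|\,2R}}{4\pi|x-y|}
\end{equation*}
on $B_R\times B_R$. Hence $\kappa\mapsto V_\kappa$ is an entire family of compact operators on $L^2(B_R)$, even Hilbert--Schmidt since $|x-y|^{-1}\in L^2_{\mathrm{loc}}$ in 3D. The data $e^{i\kappa x\cdot d}$ are entire in $\kappa$.

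By the analytic Fredholm theorem, $(I-V_\kappa)^{-1}$ is meromorphic on $\mathbb C$ with a discrete pole set $\mathcal I$, provided invertibility holds at a single point. This pole set depends only on $q$ and not on $d$. The extension to $\mathbb R^3$ is then
\begin{equation*}
u=e^{i\kappa x\cdot d}+V_\kappa u ,
\end{equation*}
which is meromorphic into $H^2_{\mathrm{loc}}(\mathbb R^3)$. Uniqueness of the meromorphic extension follows from the identity theorem.

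For the second half of \eqref{DiscreteWaveNumbers-20261004}, use that for $\operatorname{Im}\kappa=\eta$ large, $\|V_\kappa\|_{L^2(B_R)\to L^2(B_R)}\to0$. Indeed, by Schur's test,
\begin{equation*}
\|V_\kappa\|\le\|q\|_\infty\sup_x\int e^{-\eta|x-y|}\,(4\pi|x-y|)^{-1}\,dy=\|q\|_\infty\,\eta^{-2}.
\end{equation*}
This bound is uniform in $\operatorname{Re}\kappa$. A Neumann series then gives invertibility for $\operatorname{Im}\kappa>M:=\|q\|_\infty^{1/2}$. This supplies the point needed for the Fredholm alternative and shows $\mathcal I\cap\{\operatorname{Im}z>M\}=\emptyset$.

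The main step is showing $\mathcal I\cap(0,\infty)=\emptyset$, i.e.\ injectivity of $I-V_\kappa$ for real $\kappa>0$; this is where $\operatorname{Im}q\ge0$ enters. Let $w=V_\kappa w$ with $w$ extended to $\mathbb R^3$. Then $w$ solves $\Delta w+\kappa^2w+qw=0$ and is radiating. Green's formula on $B_\rho$ gives
\begin{equation*}
\operatorname{Im}\int_{\partial B_\rho}\bar w\,\partial_\nu w\,ds=\int_{B_\rho}\operatorname{Im}q\,|w|^2\,dx\ge0 .
\end{equation*}
Combined with the radiation condition, this yields $\lim_{\rho\to\infty}\int_{|x|=\rho}|w|^2\,ds=0$. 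Rellich's lemma then gives $w=0$ outside $B_R$.

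To conclude $w\equiv0$ one needs unique continuation for $\Delta w+(\kappa^2+q)w=0$ with $q\in L^\infty$. Since the potential is bounded, the standard Carleman-type weak UCP in $H^2_{\mathrm{loc}}$ (e.g.\ Jerison--Kenig, or the $L^\infty$-potential result in Colton--Kress) applies. I expect this to be the delicate step: one must be careful that UCP holds for merely bounded complex $q$, which is classical, and I would cite it.

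Uniqueness of the solution for $\kappa>0$ is the same argument applied to the difference of two solutions. The $C^1$ regularity outside $B_R$ follows because $V_\kappa u$ is real-analytic there.
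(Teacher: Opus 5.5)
Your route is the paper's own: the Lippmann--Schwinger reduction on $B_R$, Hilbert--Schmidt compactness, decay of $\|V_\kappa\|$ as $\operatorname{Im}\kappa\to+\infty$, the analytic Fredholm theorem, and Rellich's lemma plus unique continuation for injectivity at $\kappa>0$. Your Schur bound $\|V_\kappa\|\le\|q\|_\infty(\operatorname{Im}\kappa)^{-2}$ is a slightly sharper version of the paper's Hilbert--Schmidt/dominated-convergence step, and it gives the explicit value $M=\|q\|_\infty^{1/2}$. However, the energy identity in your key step has the wrong sign, and with that sign the deduction does not go through.

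Multiplying $\Delta w+\kappa^2 w+qw=0$ by $\overline w$ and integrating over $B_\rho$ gives
\[
\int_{\partial B_\rho}\overline w\,\partial_\nu w\,ds-\int_{B_\rho}|\nabla w|^2\,dx+\kappa^2\int_{B_\rho}|w|^2\,dx+\int_{B_\rho}q\,|w|^2\,dx=0 .
\]
Taking imaginary parts,
\[
\operatorname{Im}\int_{\partial B_\rho}\overline w\,\partial_\nu w\,ds=-\int_{B_\rho}\operatorname{Im}q\,|w|^2\,dx\le 0,
\]
which is the paper's identity \eqref{rellich}.

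With your sign, the radiation condition only tells you that the left-hand side equals $\kappa\int_{\partial B_\rho}|w|^2\,ds+o(1)\ge 0$. That is perfectly consistent with a nonnegative right-hand side, so nothing forces $\int_{\partial B_\rho}|w|^2\,ds\to 0$; as written, your argument would need $\operatorname{Im}q\le 0$.

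The argument works because the signs oppose. The radiation condition makes the outgoing flux nonnegative, while absorption ($\operatorname{Im}q\ge0$) makes it nonpositive. Together they give
\[
\kappa\int_{\partial B_\rho}|w|^2\,ds+\int_{B_\rho}\operatorname{Im}q\,|w|^2\,dx=o(1),
\]
with both terms nonnegative, and this is the hypothesis of Rellich's lemma. Since this is exactly where $\operatorname{Im}q\ge 0$ enters, you need to fix it. With the corrected sign, the rest of your argument goes through as in the paper: Rellich, unique continuation for the bounded complex potential $\kappa^2+q$, and the Fredholm alternative.
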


\begin{remark}
    From \eqref{DefineSetOfPoles-20261005} and \eqref{DefinitionOfCompactOperators-20261004} in the proof of Theorem \ref{th:sw}, the discrete set $\mathcal{I}$ is characterized by the following homogeneous integral equation with $\kappa \in \mathbb C$: 
    \begin{align}\label{eigenfunction-20261005}
        u(x) = \int_{B_R} 
        \frac{e^{i \kappa |x-y|}}{4\pi |x-y|} q(y) u(y) 
        dy, ~x\in B_R,  
    \end{align}
    which is a truncated version of the Lippmann--Schwinger equation \eqref{LS} (see also \eqref{fm}).  
    A complex number $\kappa \in \mathbb C$ belongs to $\mathcal{I}$ if and only if the eigenvalue problem \eqref{eigenfunction-20261005} has a non-trivial solution in $L^2(B_R)$. 
\end{remark}

As a consequence of Theorem \ref{th:sw}, the following corollary establishes both the integral expression and the meromorphic extension of the far-field patterns for complex wavenumbers. 

\begin{corollary}\label{Corollary:FarField-20261005}
Assume the hypotheses of Theorem \ref{th:sw} and use the same notation. Let $d \in \mathbb S^2$.  Write $u(\cdot; d, \kappa)$ for the meromorphic extension of the total field with $\kappa \in \mathbb C$, introduced in Theorem \ref{th:sw}. 
Then, the following function
\be\label{FP}
u^\infty(\hat{x}; d, \kappa) :=\frac{1}{4\pi}\int_{ B_R } e^{-i \kappa \hat{x}\cdot y}q(y) u(y; d, \kappa)\,dy, ~\hat{x}\in\s^2
\en
as a function of $\kappa$, is meromorphic from $\mathbb C$ to $C( \mathbb S^2)$, with possible poles only in $\mathcal{I}$, where $\mathcal{I}$ satisfies \eqref{DiscreteWaveNumbers-20261004}. Furthermore, for each $\kappa \in \mathbb C \backslash \mathcal{I}$,
\begin{align*}
   u(x; d, \kappa) - e^{i \kappa x \cdot d} 
    = \frac{e^{i\kappa |x|}}{|x|} \Big( u^\infty(\hat x;d,\kappa)+ \mathcal{O}\big(|x|^{-1}\big)
    \Big)
   ~\text{ as }  |x|\to+\infty,
\end{align*}
uniformly in all directions \(\hat x = x/|x|\). 
\end{corollary}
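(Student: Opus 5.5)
The corollary follows from Theorem \ref{th:sw} together with the Lippmann--Schwinger representation of the total field. Fix \(\kappa\in\mathbb C\setminus\mathcal I\) and write \(u=u(\cdot;d,\kappa)\). By the uniqueness of meromorphic continuation, the identity
\[
u(x)=e^{i\kappa x\cdot d}+\int_{B_R}\Phi_\kappa(x,y)\,q(y)\,u(y)\,dy,
\qquad
\Phi_\kappa(x,y):=\frac{e^{i\kappa|x-y|}}{4\pi|x-y|},
\]
holds for all \(x\in\mathbb R^3\). For real \(\kappa>0\) it is the Lippmann--Schwinger equation satisfied by the radiating solution. Both sides are meromorphic in \(\kappa\) with values in \(H^2_{\mathrm{loc}}\): the right-hand side is built from \(u\) restricted to \(B_R\), which is meromorphic in \(L^2(B_R)\), and the kernel is entire in \(\kappa\) for each fixed \(x\neq y\). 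So the identity persists off \(\mathcal I\). In fact the extension in Theorem \ref{th:sw} is presumably constructed exactly this way, by solving \((I-T_\kappa)u|_{B_R}=e^{i\kappa x\cdot d}\) with \(T_\kappa\) the volume potential on \(B_R\) through the analytic Fredholm theorem, and then defining \(u\) on all of \(\mathbb R^3\) by the right-hand side.

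Next, I would prove the asymptotics. For \(|x|\ge 2R\) and \(|y|\le R\), expand \(|x-y|=|x|-\hat x\cdot y+\rho(x,y)\) with \(|\rho|\le C|y|^2/|x|\), and note that \(1/|x-y|=1/|x|+\mathcal O(|x|^{-2})\) uniformly in \(y\). This gives
\[
\Phi_\kappa(x,y)=\frac{e^{i\kappa|x|}}{4\pi|x|}\,e^{-i\kappa\hat x\cdot y}\bigl(1+\mathcal O(|x|^{-1})\bigr).
\]
The constant in the error depends on \(\kappa\) and \(R\). For complex \(\kappa\), the factor \(e^{i\kappa\rho}\) satisfies \(|e^{i\kappa\rho}-1|\le |\kappa||\rho|e^{|\operatorname{Im}\kappa||\rho|}\), which is still \(\mathcal O(|x|^{-1})\) uniformly in \(y\) and \(\hat x\). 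Inserting this expansion and using \(qu\in L^1(B_R)\) (indeed \(L^2\)) yields the stated expansion of \(u-e^{i\kappa x\cdot d}\), with \(u^\infty\) given by \eqref{FP}, uniformly in \(\hat x\).

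Finally, I would establish meromorphy of \(u^\infty\) into \(C(\mathbb S^2)\). The map \(\kappa\mapsto qu(\cdot;d,\kappa)|_{B_R}\) is meromorphic into \(L^2(B_R)\) with poles only in \(\mathcal I\). The map \(\kappa\mapsto\bigl(g\mapsto\frac1{4\pi}\int_{B_R}e^{-i\kappa\hat x\cdot y}g(y)\,dy\bigr)\) is entire with values in \(\mathcal L(L^2(B_R),C(\mathbb S^2))\). Its \(\kappa\)-derivative is given by the same integral with the extra factor \(-i\hat x\cdot y\) inside, and continuity in \(\hat x\) follows from dominated convergence. Composing an entire operator-valued function with a meromorphic vector-valued function gives a meromorphic function with poles contained in \(\mathcal I\). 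The point I expect to need the most care is the first step, namely confirming that the extended \(u\) satisfies the Lippmann--Schwinger identity everywhere. This requires identity-theorem uniqueness on the connected set \(\mathbb C\setminus\mathcal I\), which is connected because \(\mathcal I\) is discrete.
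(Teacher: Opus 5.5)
Your proposal is correct and follows essentially the same route as the paper, whose proof simply invokes Theorem \ref{th:sw}, the Lippmann--Schwinger equation \eqref{LS}, and the large-$|x|$ expansion \eqref{AsymptoticsOfFundamentalSolution-20261004} of $\varPhi(\cdot;\kappa)$ for complex $\kappa$; you fill in details the paper leaves implicit, namely the remainder bound for $|x-y|-|x|+\hat x\cdot y$, the control via $e^{|\operatorname{Im}\kappa|\,|\rho|}$, and the entire operator-valued map into $\mathcal L(L^2(B_R),C(\mathbb S^2))$. The identity-theorem step is not actually needed, since, as you anticipated, the paper's proof of Theorem \ref{th:sw} constructs the extension as the solution of \eqref{LS} for every $\kappa\in\mathbb C\setminus\mathcal I$.
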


To prove Theorem \ref{th:sw}, we reformulate \eqref{eqs}-\eqref{eq:radiation} as the Lippmann-Schwinger integral equation, and then extend this integral equation to complex wavenumbers $\kappa \in \mathbb C$. 
For this purpose, we consider $\varPhi(\cdot; \kappa)$ with $\kappa \in \C$, which is a fundamental solution of \(-\Delta-\kappa^2\), that is, a solution of $(\Delta+\kappa^2)u= - \delta_0$ in $\R^3$, and satisfies the Sommerfeld radiation condition \eqref{eq:radiation}. More precisely, 
\be\label{fm}
\varPhi(x; \kappa) := \frac{e^{i \kappa |x|}}{4\pi |x|},~x\in \R^3\setminus\{0\}
~\text{ for each }
\kappa \in \mathbb C. 
\en
For \(d\in\mathbb S^2\) and \(\kappa\in\mathbb C\), the Lippmann-Schwinger integral equation reads
\be\label{LS}
u(x; d, \kappa) = e^{i \kappa x\cdot d}+\int_{B_R} 
\varPhi(x-y; \kappa) q(y) u(y; d, \kappa) 
dy, ~x\in \R^3.
\en
Any function $u\in L^2_{\mathrm{loc}}(\mathbb R^3)$ satisfying \eqref{LS}  is said to be a solution to this equation. 

It is well known that there is an equivalence between the forward scattering problem \eqref{eqs}-\eqref{eq:radiation} and the Lippmann-Schwinger integral equation.  A result of this kind was proved in \cite[Theorem 8.3, p. 309]{CK} for the Helmholtz equation with a piecewise continuous refractive index; here, we allow \(q\) to be merely bounded.

\begin{lemma}\label{thm:LS-equivalence}
Let $q\in L^\infty(\mathbb{R}^3)$ satisfy supp$(q)\subset B_R$. Let $d\in \mathbb S^2$ and $\kappa>0$. Then, the following two statements are true:
\begin{itemize}
    \item[(i)] If $u\in H^2_{\mathrm{loc}}(\mathbb{R}^3) \cap C^1\big( \mathbb R^3 \backslash B_R \big)$ is a solution of the scattering problem \eqref{eqs}-\eqref{eq:radiation}, then $u$ solves the Lippmann--Schwinger equation \eqref{LS}. 
    \item[(ii)] If $u\in L_{\mathrm{loc}}^2(\mathbb{R}^3)$ solves the Lippmann--Schwinger equation \eqref{LS}, then $u\in H^2_{\mathrm{loc}}(\mathbb{R}^3)\cap C^1\big( \mathbb R^3 \backslash B_R \big)$ and it is a solution of the scattering problem \eqref{eqs}-\eqref{eq:radiation}.
\end{itemize}
\end{lemma}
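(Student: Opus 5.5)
For part (i), I will follow the classical Green's representation argument of Colton–Kress, adapted to $H^2_{\mathrm{loc}}$ regularity. Write $u=u^{\mathrm{in}}+u^{\mathrm{sc}}$ with $u^{\mathrm{in}}=e^{i\kappa x\cdot d}$. Since $u^{\mathrm{in}}$ solves $(\Delta+\kappa^2)u^{\mathrm{in}}=0$, the scattered field satisfies
\begin{equation*}
(\Delta+\kappa^2)u^{\mathrm{sc}} = -qu \quad\text{in } \mathbb R^3 ,
\end{equation*}
with right-hand side in $L^2$ and supported in $\overline{B_R}$.

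Fix $x$ and a ball $B_\rho$ with $\rho>R$ and $|x|<\rho$. I will apply Green's second identity to $u^{\mathrm{sc}}$ and $\varPhi(x-\cdot;\kappa)$ on $B_\rho\setminus B_\epsilon(x)$. This is legitimate for $H^2$ functions, either by density of smooth functions in $H^2(B_\rho)$ with traces continuous in $H^{3/2}\times H^{1/2}$, or by working distributionally. Letting $\epsilon\to0$ gives
\begin{equation*}
u^{\mathrm{sc}}(x)=\int_{B_R}\varPhi(x-y;\kappa)q(y)u(y)\,dy+\int_{\partial B_\rho}\Big(\partial_\nu u^{\mathrm{sc}}\,\varPhi-u^{\mathrm{sc}}\,\partial_\nu\varPhi\Big)\,ds .
\end{equation*}
Because $u^{\mathrm{sc}}$ is $C^1$ outside $B_R$ and radiating, the standard argument (CK Theorem 2.4) shows that the boundary integral is independent of $\rho$ and tends to $0$ as $\rho\to\infty$. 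This yields \eqref{LS} for a.e. $x$, and hence as an identity in $L^2_{\mathrm{loc}}$.

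The cleanest way to justify these steps is distributional. Set
\begin{equation*}
w:=\int\varPhi(\cdot-y;\kappa)q(y)u(y)\,dy .
\end{equation*}
I will show that $w$ is a radiating $H^2_{\mathrm{loc}}$ solution of the same equation, so that $u^{\mathrm{sc}}-w$ is an entire radiating solution of the Helmholtz equation. Weyl's lemma makes it smooth, and Rellich's lemma together with unique continuation (CK Theorem 2.14) forces it to vanish.

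For part (ii), suppose $u\in L^2_{\mathrm{loc}}$ solves \eqref{LS}. Then $qu\in L^2$ has compact support in $\overline{B_R}$. The volume potential $V(g)=\int\varPhi(\cdot-y)g(y)\,dy$ maps compactly supported $L^2$ functions into $H^2_{\mathrm{loc}}$ and satisfies $(\Delta+\kappa^2)V(g)=-g$ in the distributional sense. This follows because $V(g)$ is the convolution of $g$ with a fundamental solution, and the $H^2$ bound comes from Calder\'on–Zygmund theory (as in Gilbarg–Trudinger, or CK Theorem 8.2 extended by density from Hölder to $L^2$ densities). Consequently $u=u^{\mathrm{in}}+V(qu)\in H^2_{\mathrm{loc}}$, and applying $\Delta+\kappa^2$ gives $\Delta u+\kappa^2u+qu=0$.

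Outside $\overline{B_R}$ the kernel is smooth in $x$ for $y\in B_R$. Hence $V(qu)$ is $C^\infty$ there, and in particular $C^1$ on $\mathbb R^3\setminus B_R$. To be careful, the closed complement includes $\partial B_R$, so I will either use $\operatorname{supp} q\subset B_R$ compactly, or Sobolev embedding plus the equation near $\partial B_R$ where $q=0$ locally. The radiation condition is then inherited from $\varPhi$ uniformly in $y\in B_R$, by differentiating under the integral.

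The main obstacle I expect is the low regularity of $q$: the $H^2_{\mathrm{loc}}$ mapping property of the volume potential for merely $L^2$ densities, and justifying Green's formula or uniqueness in part (i) without Hölder continuity. I would handle both via the Fourier/Calder\'on–Zygmund $L^2$ estimate combined with density arguments.
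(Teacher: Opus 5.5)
Your proposal is correct. Part (ii) matches the paper's argument: the volume potential of $qu$, elliptic regularity, smoothness away from $\operatorname{supp} q$, and the radiation condition from the uniform asymptotics \eqref{AsymptoticsOfFundamentalSolution-20261004}.

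For part (i), the paper applies Green's second identity to $u$ on $B_R$ itself. It then removes the boundary term by splitting $u=u^{\mathrm{in}}+u^{\mathrm{sc}}$: the interior Green formula reproduces $u^{\mathrm{in}}(x)$, and the exterior Green formula with the radiation condition gives \eqref{eq:green-sc}. Your first sketch (Green's identity for $u^{\mathrm{sc}}$ on $B_\rho$, then $\rho\to\infty$) is essentially the same argument. The vanishing of the large-sphere integral is exactly what lies behind \eqref{eq:green-sc}.

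Your preferred route is genuinely different: you show that $u^{\mathrm{sc}}-V(qu)$ is an entire radiating solution of the Helmholtz equation and hence zero. This buys you three things:
\begin{itemize}
\item It avoids any pointwise Green identity pairing the singular kernel $\varPhi(\cdot-x)$ with a merely $H^2$ function, which the paper covers only by a brief ``smooth approximation'' remark.
\item It reuses exactly the mapping properties of $V$ you establish for (ii).
\item It yields \eqref{LS} on all of $\mathbb{R}^3$ at once. The paper writes out only $x\in B_R$; the case $|x|>R$ follows from the same three identities, with the exterior formula now producing $u^{\mathrm{sc}}(x)$.
\end{itemize}
The price is an appeal to the classical uniqueness theorem for entire radiating solutions. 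That theorem needs $\kappa>0$ real, which holds here.
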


The proof of Lemma \ref{thm:LS-equivalence} is given in Appendix \ref{sec:proof-of-ls-equivalence}. We are now in a position to prove Theorem \ref{th:sw} and Corollary \ref{Corollary:FarField-20261005}. 

\begin{proof}[Proof of Theorem \ref{th:sw}]
The proof is classical and is based on the analytic Fredholm theorem. Observe that the Lippmann-Schwinger equation \eqref{LS} is equivalent to the following simplified version over $B_R$:
\begin{align}\label{LS-Alternative}
u(x; d, \kappa) = e^{i \kappa x\cdot d}+\int_{B_R} 
\varPhi(x-y; \kappa) q(y) u(y; d, \kappa) 
dy, ~x\in B_R.
\end{align}
To place this equation within the framework of the analytic Fredholm theorem, we define the family of operators $\{ K(\kappa)\}_{\kappa \in \mathbb C}$  in $\mathcal L\big( L^2(B_R) \big)$: for each $\kappa \in \mathbb C$, 
\begin{align}\label{DefinitionOfCompactOperators-20261004}
    \big( K(\kappa)  w \big) (x)  := \int_{B_R}  \varPhi(x-y;\kappa) q(y) w(y) dy,
    ~x\in B_R
    \text{ for each } w \in L^2(B_R). 
\end{align}
It is clear that the map $\kappa \mapsto K(\kappa)$ is analytic from $\mathbb C$ to $\mathcal L\big( L^2(B_R) \big)$. 
The remainder of the proof is organized into the following steps. 

\vskip 5pt
\noindent\textit{
Step 1. Show that for each $\kappa \in \mathbb C$, the operator $K(\kappa)$ is compact from $L^2(B_R)$ to $L^2(B_R)$.
}

Fix an arbitrary $\kappa \in \mathbb C$. Since the kernel of \(K(\kappa)\) belongs to \(L^2(B_R \times B_R)\), the Hilbert-Schmidt theorem (see e.g. \cite[Theorems VI.22-23, p. 210]{RS72}) implies that \(K(\kappa)\) is compact on $L^2(B_R)$.

By the way, we will show a regularity result of $K(\cdot)$ which has no relation with the conclusion in Step 1 but will be used in later proofs. This result is claimed as follows:
\begin{align}\label{ImprovedRegularity-20261004}
    K(\kappa) w  \in  H^2(B_R)
    \text{ for each }   w \in L^2(B_R).
\end{align}
Indeed, fix a $w \in L^2(B_R)$. By the definition of $K(\kappa)$ (in  \eqref{DefinitionOfCompactOperators-20261004}),  $K(\kappa) w$ is the restriction to $B_R$ of  the following function
\begin{align}\label{ExtensionOfw-20261004}
    u_w^{\mathrm{sc}}(x) := \int_{B_R}  \varPhi(x-y;\kappa) q(y) w(y) dy,
    ~x\in \mathbb R^3,
\end{align}
which satisfies the elliptic equation
\begin{align}\label{QuationForExtensionOfw-20261004}
    ( - \Delta - \kappa^2)  u_w^{\mathrm{sc}} = q \tilde{w}    \text{ in }  L^2(\mathbb R^3). 
\end{align}
Here, $\tilde w$ is the extension of \(w\) by zero outside \(B_R\), and \(\varPhi(\cdot ;\kappa)\) is a fundamental solution of \(-\Delta - \kappa^2\). By the ellipticity of $\Delta$, it is clear that
\begin{align*}
    u_w^{\mathrm{sc}} \in H_{\mathrm{loc}}^2(\mathbb R^3)  \text{ and }   
    K(\kappa) w = u_w^{\mathrm{sc}}|_{B_R} \in H^2(B_R). 
\end{align*}
Thus, the  claim \eqref{ImprovedRegularity-20261004} is true.

\vskip 5pt
\noindent\textit{
    Step 2. Prove that 
    \begin{align}\label{LimitAlongImaginaryAxis-20261004}
        \lim_{ \kappa \in \mathbb C, \, \operatorname{Im} \kappa \rightarrow +\infty}
        \| K(\kappa) \|_{ \mathcal L( L^2(B_R) ) }
        = 0 .
    \end{align}
}

Using \eqref{DefinitionOfCompactOperators-20261004} and \eqref{fm}, we obtain, for \(\kappa = \kappa_1 + i \kappa_2\) with \(\kappa_1,\kappa_2 \in \mathbb R\),
\begin{eqnarray*}
    \| K(\kappa) \|_{\mathcal L( L^2(B_R) )}
    &\leq&
    \frac{1}{4\pi} \left( \int_{B_R}   \int_{B_R}
    \left( \frac{ e^{-\kappa_2 |x-y| } }{|x-y|} |q(y)| \right)^2  dy dx
    \right)^{\frac{1}{2}}\\
    &\leq& \frac{ \sqrt{|B_R|} }{4\pi} \left(\int_{B_R} |q(y)|^2dy\right)^{1/2}\,\left(\int_{B_R}\int_{B_R}\frac{ e^{-\kappa_2 |x-y| } }{|x-y|^2}  dxdy\right)^{1/2}.
\end{eqnarray*}
Then, by the Lebesgue dominated convergence theorem, \eqref{LimitAlongImaginaryAxis-20261004} follows.

\vskip 5pt
\noindent\textit{
    Step 3. Prove that, for each $\kappa>0$, the operator $I-K(\kappa)$ is invertible on $L^2(B_R)$.
}

Fix an arbitrary \(\kappa>0\). By the compactness of $K(\kappa)$ over $L^2(B_R)$ (see Step 1), it suffices to show that the following equation
\begin{align}\label{KernelOfK-20261004}
    (I - K(\kappa)) w =0   \text{ in } L^2(B_R)
\end{align}
has only the trivial solution $w=0$ in $L^2(B_R)$. For this purpose, let $w\in  L^2(B_R)$ be a solution to the above equation. Let $u_w^{\mathrm{sc}}$ be given in \eqref{ExtensionOfw-20261004} with current $w$. It follows from \eqref{ExtensionOfw-20261004},  \eqref{KernelOfK-20261004} and \eqref{QuationForExtensionOfw-20261004} that $u_w^{\mathrm{sc}} |_{B_R} = w$ and that
\begin{align}\label{EllipticEquaitonOFuw-20261004}
    (-\Delta - \kappa^2) u_w^{\mathrm{sc}}  = q u_w^{\mathrm{sc}}  ~\text{ in } \mathbb R^3.
\end{align}
Next, multiplying \eqref{EllipticEquaitonOFuw-20261004} by \(\overline{u_w^{\mathrm{sc}}}\), integrating over \(B_r\) for \(r>0\), and applying Green's formula, we obtain
\ben
\int_{B_r} |\nabla u_w^{\mathrm{sc}}|^2-\kappa^2|u_w^{\mathrm{sc}}|^2-q(x)\,|u_w^{\mathrm{sc}}|^2\,dx-\int_{\partial B_r} \partial_\nu u_w^{\mathrm{sc}} \overline{u_w^{\mathrm{sc}}}\,ds=0,
\enn 
where $\nu$ is the outward unit normal on $\partial B_r$. Taking the imaginary part yields
\begin{equation}\label{rellich}
    \operatorname{Im}\left(\int_{\partial B_r} \partial_\nu u_w^{\mathrm{sc}} \overline{u_w^{\mathrm{sc}}}\,ds\right)+\int_{B_r}(\operatorname{Im} q)\, |u_w^{\mathrm{sc}}|^2\,dx=0,
    ~\forall\, r>0.
\end{equation}
Meanwhile, since $\kappa>0$, one can check from \eqref{ExtensionOfw-20261004} and \eqref{fm} (see also \eqref{AsymptoticsOfFundamentalSolution-20261004}) that
$$
\partial_{\nu}u_w^{\mathrm{sc}} \overline{u_w^{\mathrm{sc}}}
 = i\kappa |u_w^{\mathrm{sc}}|^2  + o(|x|^{-2})
 ~\text{ as } |x| \rightarrow +\infty. 
$$
Inserting this into \eqref{rellich} gives
$$
\kappa \int_{\partial B_r} |u_w^{\mathrm{sc}}|^2 \, dS+\int_{B_r} (\operatorname{Im} q) |u_w^{\mathrm{sc}}|^2 \, dx+o(1)=0
~\text{ as }  r \rightarrow +\infty.
$$
Since $\kappa>0$ and $\operatorname{Im}q\ge 0$, both integrals above are nonnegative. Therefore
$$
\frac{1}{r}\int_{r<|x|<2r} |u_w^{\mathrm{sc}}|^2\,dx
\to 0
~\text{ as }  r \rightarrow +\infty.
$$
Outside $B_R$, the potential $q$ vanishes, so it follows from \eqref{EllipticEquaitonOFuw-20261004} that 
$$
(-\Delta-\kappa^2)u_w^{\mathrm{sc}}=0 ~\text{ when } |x|>R.
$$
Applying Rellich's lemma (see e.g. \cite[Proposition 5.2, p. 5482]{RS20-2}), we obtain
$$
u_w^{\mathrm{sc}}=0 ~\text{ when } |x|>R.
$$
We now apply the unique continuation property for elliptic equations with complex-valued \(L^\infty\) potentials (see e.g. \cite[Theorem 14.12, p. 610]{V25}) to get that 
$u_w^{\mathrm{sc}} \equiv 0 ~\text{ in } \mathbb{R}^3$. 

Finally, since $u_w^{\mathrm{sc}} |_{B_R} = w$ (see the property above \eqref{EllipticEquaitonOFuw-20261004}), we conclude that $w=0$ in $L^2(B_R)$. Thus, equation \eqref{KernelOfK-20261004} has only the trivial solution. 

\vskip 5pt
\noindent\textit{
    Step 4. Proof of the desired conclusions.
}

Write $\mathbb R_+ := \big\{ x \in \mathbb R ~:~ x >0 \big\}$. From Steps 2--3, there exists a sufficiently large \(M>0\) such that the operator $I - K(\kappa)$ is invertible for each $\kappa$ in the following set
\begin{align*}
    \mathbb R_+  \cup  \big\{ z\in \mathbb C  ~:~  \operatorname{Im} z > M\}.
\end{align*}
Then, by  the analytic Fredholm theorem (see e.g., \cite[Theorem C.8, p. 514]{DZ19}) applied to $K(\cdot)$ (see Step 1), the following set
\begin{align}\label{DefineSetOfPoles-20261005}
    \mathcal{I} := \big\{ \kappa \in \mathbb C~:~  K(\kappa) \text{ has the eigenvalue } 1 \big\}
\end{align}
is discrete in $\mathbb C$ (and independent of $d \in \mathbb S^2$), and satisfies
\begin{align*}
    \mathcal{I} \cap \mathbb R_+   = \emptyset
    ~\text{ and }~
    \mathcal{I}  \cap  \big\{ z\in \mathbb C  ~:~  \operatorname{Im} z > M \big\} = \emptyset.
\end{align*}
Furthermore, \(I-K(\kappa)\) is invertible for \(\kappa \in \mathbb C\backslash \mathcal{I}\), and \((I-K(\cdot))^{-1}\) is meromorphic on \(\mathbb C\).  

Now, we deduce from the above, \eqref{DefinitionOfCompactOperators-20261004}, and \eqref{LS-Alternative} that the Lippmann-Schwinger equation \eqref{LS} has a unique solution in $L^2_{\mathrm{loc}}(\mathbb R^3)$ for each $\kappa \in \mathbb C \backslash \mathcal{I}$. Moreover, this solution is analytic in the variable $\kappa$. 

In particular, equation \eqref{LS} has a unique solution $u(\cdot;d,\kappa)$ in $L^2_{\mathrm{loc}}(\mathbb R^3)$ for each $\kappa>0$. Then, by Lemma \ref{thm:LS-equivalence}, this solution $u(\cdot;d,\kappa)$ is also the unique solution to the scattering problem \eqref{eqs}-\eqref{eq:radiation} with $\kappa>0$.

Furthermore, the solution \(u(\cdot;d,\kappa)\), as a function of $\kappa$, extends analytically to \(\mathbb C\backslash \mathcal{I}\), where it solves \eqref{LS} for complex wavenumbers. The desired conclusion follows.

\vskip 5pt
In summary, the proof is completed. 
\end{proof}

\begin{proof}[Proof of Corollary \ref{Corollary:FarField-20261005}]
The conclusions follow directly from Theorem \ref{th:sw}, \eqref{LS} and the asymptotic expansion \eqref{AsymptoticsOfFundamentalSolution-20261004} of $\varPhi(\cdot; \kappa)$ with $\kappa \in \mathbb C$. 
\end{proof}

\subsection{High-frequency asymptotics}
\label{high-freq-subsec}
 
 This subsection presents the leading asymptotic expansions of the meromorphically extended far-field patterns as the wavenumber tends to infinity along the imaginary axis. 
 
 We continue to denote the meromorphic extensions of the total field and the far-field pattern by $u(\cdot;d,\kappa)$ and $u^{\infty}(\cdot;d,\kappa)$ introduced in Theorem \ref{th:sw} and Corollary \ref{Corollary:FarField-20261005}, respectively. 

\begin{theorem}\label{lem3.3}
    Assume the same hypotheses of Theorem \ref{th:sw}. Let $d \in \mathbb S^2$.  Then the total field and the far-field pattern have the following leading asymptotic expansions as $\eta \rightarrow +\infty$: 
\begin{eqnarray}\label{asfa}
\begin{array}{r@{~}l}
u(x; d, i\eta) &= e^{- \eta x\cdot d} \big(1 + o(1) \big)
~\mbox{ in } L^\infty(B_R),
\vspace{0.5em}\\
u^\infty(\hat{x}; d,  i\eta) &= \displaystyle\frac{1}{4\pi} \displaystyle\int_{ B_R } e^{\eta(\hat{x} -d )\cdot y} q(y)  \big( 1 + o(1) \big) dy
~\mbox{ in } C(\mathbb S^2).
\end{array}
\end{eqnarray} 
Here, \(o(1)\) denotes a remainder that tends to zero in \(L^{\infty}(B_{R})\) as \(\eta\to+\infty\).
\end{theorem}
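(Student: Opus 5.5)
The plan is to work with the Lippmann--Schwinger equation \eqref{LS-Alternative} at $\kappa=i\eta$, after factoring out the incident wave. For $\eta>M$ we have $i\eta\notin\mathcal I$ by Theorem \ref{th:sw}, so $u(\cdot;d,i\eta)$ solves
\begin{align*}
u(x) = e^{-\eta x\cdot d} + \int_{B_R}\frac{e^{-\eta|x-y|}}{4\pi|x-y|}q(y)u(y)\,dy,\quad x\in B_R .
\end{align*}
Set $w(x):=e^{\eta x\cdot d}u(x;d,i\eta)$. Then $w = 1 + T_\eta w$, where
\begin{align*}
(T_\eta w)(x) := \int_{B_R}\frac{e^{-\eta(|x-y| - (x-y)\cdot d)}}{4\pi|x-y|}q(y)w(y)\,dy .
\end{align*}
The whole task reduces to showing that $\|T_\eta\|_{\mathcal L(L^\infty(B_R))}\to0$ as $\eta\to+\infty$. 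Granting this, a Neumann series gives $w=(I-T_\eta)^{-1}1 = 1+o(1)$ in $L^\infty(B_R)$. The resulting $L^\infty$ solution coincides with the $L^2$ solution from Theorem \ref{th:sw}, because the $L^2$ solution is unique. This proves the first line of \eqref{asfa}.

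The main obstacle is the operator norm bound, since the weight $e^{-\eta(|z|-z\cdot d)}$ does \emph{not} decay when $z=x-y$ is parallel to $d$. This is unlike Step 2 of Theorem \ref{th:sw}, where $e^{-\eta|z|}$ decays everywhere. We have
\begin{align*}
\|T_\eta\|_{\mathcal L(L^\infty)} \le \frac{\|q\|_\infty}{4\pi}\sup_{x\in B_R}\int_{|z|\le 2R}\frac{e^{-\eta|z|(1-\cos\theta)}}{|z|}\,dz ,
\end{align*}
where $\theta$ is the angle between $z$ and $d$. In spherical coordinates this integral equals
\begin{align*}
2\pi\int_0^{2R}\int_{-1}^{1} e^{-\eta\rho(1-t)}\,\rho\,dt\,d\rho = 2\pi\int_0^{2R}\frac{1-e^{-2\eta\rho}}{\eta}\,d\rho \le \frac{4\pi R}{\eta}.
\end{align*}
So the integral is $O(1/\eta)\to0$: the $|z|^{-1}$ singularity together with the angular integration absorbs the non-decaying direction. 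This explicit computation is what I would carry out first. If it failed, I would fall back on dominated convergence, using that the weight tends to $0$ off the measure-zero ray.

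For the far field, I substitute $u=e^{-\eta y\cdot d}w$ into \eqref{FP} with $\kappa=i\eta$. Since $e^{-i\kappa\hat x\cdot y}=e^{\eta\hat x\cdot y}$, this gives
\begin{align*}
u^\infty(\hat x;d,i\eta)=\frac1{4\pi}\int_{B_R}e^{\eta(\hat x-d)\cdot y}q(y)\,w(y)\,dy .
\end{align*}
Because $w=1+o(1)$ in $L^\infty(B_R)$ uniformly, and this remainder does not depend on $\hat x$, the second line of \eqref{asfa} follows uniformly in $\hat x\in\mathbb S^2$, i.e.\ in $C(\mathbb S^2)$.
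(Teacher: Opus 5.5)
Your proof is correct, but it takes a more direct route than the paper. The paper uses the same rescaled unknown $v=e^{\eta x\cdot d}u$ and then argues in three steps:
\begin{itemize}
\item it shows that the Hilbert--Schmidt norm of $\widetilde K(\tau)$ tends to zero, by dominated convergence, which gives a uniform $L^2$ bound on $v$;
\item it bootstraps this to a uniform $L^\infty$ bound via Cauchy--Schwarz against $|x-y|^{-1}$;
\item it proves the volume estimate $|\Omega_\delta(x)|\le 5\pi R^2\delta$ for the paraboloidal region where $|x-y|-d\cdot(x-y)\le\delta$, and splits the integral over $\Omega_\delta(x)$ and its complement to get $\limsup_{\eta\to+\infty}\|v-1\|_{L^\infty(B_R)}\le C\delta^{1/2}$, before letting $\delta\to0$.
\end{itemize}

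Your spherical-coordinate computation with polar axis $d$ gives $\|T_\eta\|_{\mathcal L(L^\infty(B_R))}\le R\|q\|_{L^\infty}/\eta$ and replaces all of this at once. The Neumann series in $L^\infty$ then supplies both the bounded solution and the quantitative rate $\|v-1\|_{L^\infty(B_R)}=O(\eta^{-1})$, which the paper's qualitative argument does not yield. You also correctly identify this solution with the meromorphic extension through uniqueness in $L^2(B_R)$ for $\eta>M$.

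What the paper's softer ``small exceptional set plus exponential decay elsewhere'' scheme buys is that it needs no exact evaluation of the kernel integral. It is reused almost verbatim for the point-source kernel in Theorem~\ref{th:ps}(iii), where the level sets become ellipsoids; an analogous exact computation in prolate spheroidal coordinates would also work there.

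Your fallback is sound as well. After enlarging to $B_{2R}$ the bound is independent of $x$, so dominated convergence alone would already give $\|T_\eta\|_{\mathcal L(L^\infty(B_R))}\to0$.
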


\begin{proof}
Let $M>0$ be given by Theorem \ref{th:sw}.  Fix an arbitrary $\eta> M$. Set
\begin{align}\label{DefinitionOfAmplitude-20261005}
    v(x; d, i\eta) := e^{ \eta x \cdot d} u(x; d, i \eta), ~ x\in B_R. 
\end{align}
Since $u(\cdot;d,i\eta)$ solves equation \eqref{LS}, the function $v$ satisfies
\begin{align}\label{IntegralEquationForAmplitude-20261005}
    v(x;d, i\eta ) = 1 + \frac{1}{4\pi} \int_{B_R}  \frac{ e^{ -\eta(|x-y|-d\cdot (x-y))} }{|x-y|} q(y)v(y;d,i\eta) dy,
    ~ x \in B_R.
\end{align}
The remainder of the proof is organized into the following three steps.

First, we claim 
\begin{align}\label{UniformBoundOfAmplitude-20261005}
    \limsup_{\tau\rightarrow+\infty} \| v(\cdot; d, i\tau) \|_{ L^{\infty}(B_R) }
    < +\infty.
\end{align}
Indeed, introduce the following operator $\widetilde{K}(\tau) $ in $\mathcal{L}( L^2(B_R))$ with $\tau>0$: for each $f \in L^2(B_R)$, 
\begin{align*}
    \big( \widetilde{K}(\tau)  f \big) (x) :=
    \frac{1}{4\pi} \int_{B_R}   \frac{ e^{ -\tau(|x-y|-d\cdot (x-y))} }{|x-y|} q(y) f(y) dy,
    ~ x \in B_R. 
\end{align*}
A direct calculation shows that the kernel of $\widetilde{K}(\tau)$ converges to $0$ in \(L^2(B_R \times B_R)\) as $\tau \rightarrow +\infty$, which implies
\begin{align*}
    \limsup_{\tau\rightarrow+\infty}  \| \widetilde{K}(\tau) \|_{ \mathcal{L}( L^2(B_R)) } =0.
\end{align*}
From this and \eqref{IntegralEquationForAmplitude-20261005}, we determine
\begin{align*}
    \limsup_{\tau\rightarrow+\infty} \| v(\cdot; d, i\tau) \|_{ L^{2}(B_R) }
    < +\infty.
\end{align*}
Meanwhile, when $\tau>0$,  it follows from \eqref{IntegralEquationForAmplitude-20261005} (with $\eta$ replaced by $\tau$) that
\begin{align*}
    \| v( \cdot ;d, i\tau ) \|_{ L^\infty(B_R ) }  \leq&  1 + \frac{1}{4\pi}
    \sup_{x\in B_R}  \int_{B_R} \frac{ |q(y)| }{|x-y|} |v(y;d,i\tau)| dy
    \nonumber\\
    \leq& 1 + \frac{1}{4\pi}   \Big( \int_{B_{2R}} \frac{dy}{|y|^2}  \Big)^{1/2}
    \|q\|_{ L^\infty(B_R)  }  \|v(\cdot;d,i\tau)\|_{ L^2(B_R)  }.
\end{align*}
The last two conclusions imply \eqref{UniformBoundOfAmplitude-20261005}. 

Second, for each $x\in B_R$ and $\delta>0$, the following set 
  \begin{eqnarray}\label{Omega-delta-epsilon}
   \Omega_\delta(x):= \big\{ y\in B_R:\;\;  |x-y| - d\cdot(x-y) \leq \delta \big\},
  \end{eqnarray}
  satisfies
   \begin{eqnarray}\label{EstimateOfIntegralOverSmallSet-20261005}
      \int_{\Omega_\delta(x)} |q(y)|^2 dy \leq  
      5\pi R^2\delta ||q||_{L^\infty(B_R)}^2.
  \end{eqnarray}
  Fix an $x\in B_R$ and a $\delta>0$. By coordinate translation and rotation, to estimate the volume of $\Omega_\delta(x)$, it suffices to compute the volume measure of the following domain
  \begin{eqnarray*}
   \Omega'_\delta(x):= \big\{ y\in B_{2R}\cap(x+B_R) ~:~  |y|+{\bf e}_3\cdot y\leq \delta \big\}
   \text{ with }  {\bf e}_3:=(0,0, 1).
  \end{eqnarray*}
 It is obvious that
 \begin{eqnarray*}
 \Omega'_{\delta} (x)&\subset&  \Big\{  y=(y_1,y_2,y_3) \in \R^3 ~:~  |y_3|<2R,~ \sqrt{y_1^2+y_2^2+y_3^2}+y_3 \leq \delta \Big\}
  \\
  &=& \big\{   
    y=(y_1,y_2,y_3) \in \R^3 ~:~ |y_3|<2R,~ y_1^2+y_2^2 \leq \delta^2 - 2 \delta y_3 
  \big\}
 \\
 &=&  \Big\{   
    y=(y_1,y_2,y_3) \in \R^3 ~:~   -2R< y_3 \leq \frac{\delta}{2},~ y_1^2+y_2^2 \leq \delta^2 - 2 \delta y_3
\Big\}.
\end{eqnarray*}
Thus, the measure of $\Omega'_{\delta}(x)$ is estimated as follows:
\begin{eqnarray*}
|\Omega'_{\delta}(x)| &\leq& \int_{-2R}^{ \frac{\delta}{2} } \left( \int_{y_1^2+y_2^2 \leq \delta^2 - 2 \delta y_3} d y_1 d y_2  \right) d y_3
= \int_{-2R}^{ \frac{\delta}{2} } \pi (\delta^2 - 2 \delta y_3 ) d y_3 \\
&=& \pi \delta (\delta^2/4+2R\delta+4R^2).
\end{eqnarray*}
Since $|\Omega'_\delta(x)|  \leq \frac{4}{3} \pi R^3$, the above estimate, together with the definitions of $\Omega_\delta(x)$ and $\Omega'_\delta(x)$, yields
\begin{eqnarray*}
  |\Omega_\delta(x)| \leq  |\Omega'_\delta(x)|  \leq \min\left\{ \pi \delta (\delta^2/4+2R\delta+4R^2),\frac{4}{3} \pi R^3 \right\}
 \leq   5\pi R^2\delta.
\end{eqnarray*} 
From this,  the estimate \eqref{EstimateOfIntegralOverSmallSet-20261005} follows at once. 

Third, applying the triangle and Hölder's inequalities to \eqref{IntegralEquationForAmplitude-20261005} and using \eqref{Omega-delta-epsilon}--\eqref{EstimateOfIntegralOverSmallSet-20261005}, we obtain, for \(x \in B_R\) and \(\delta>0\), 
  \begin{align*}
   &|v(x,d,i\eta)-1|   \leq   \int_{\Omega_\delta(x)} \frac{e^{-\eta[|x-y|-d\cdot(x-y)]}}{|x-y|} \, |q(y)| \, |v(y,d,i\eta)| dy
   \nonumber\\
   & \qquad
   + \int_{B_R\setminus\Omega_\delta(x)} \frac{e^{-\eta[|x-y|-d\cdot(x-y)]}}{|x-y|} \, |q(y)| \, |v(y,d,i\eta)| dy
   \nonumber\\
   \leq& \|v(\cdot,d,i\eta)\|_{L^\infty(B_R)} \Big(\int_{B_R} \frac{1}{|x-y|^2}  dy \Big)^{\frac{1}{2}} \left(\Big(\int_{\Omega_\delta(x)} |q(y)|^2 dy\Big)^{\frac{1}{2}}
   + e^{-\eta \delta} \Big(\int_{B_R\setminus\Omega_\delta(x)} |q(y)|^2  dy
   \Big)^{\frac{1}{2}}\right)\\ \no   
   \leq& \|v(\cdot,d,i\eta)\|_{L^\infty(B_R)} \Big(\int_{B_R} \frac{1}{|x-y|^2}  dy \Big)^{\frac{1}{2}} \Big((5\pi R^2\delta 
   )^{\frac{1}{2}}+ e^{-\eta \delta} (4\pi R^3/3)^{\frac{1}{2}}   \Big)||q||_{L^\infty(B_R)}.
  \end{align*}
  Since $\eta \geq M $ was arbitrary, the preceding estimate and \eqref{UniformBoundOfAmplitude-20261005} imply that, for some \(C>0\),
  \begin{eqnarray*}
   \limsup_{\eta \rightarrow +\infty}  \|v(\cdot; d,i\eta)-1\|_{L^{\infty}(B_R)} \leq C \delta^{\frac{1}{2}}
   ~\text{ for each }  \delta >0.
  \end{eqnarray*}
  This  implies
  \begin{align}\label{ConvergenceOfAmplitude-20261005}
       \lim_{\eta \rightarrow +\infty}  v(\cdot; d,i\eta) = 1
       \text{ strongly in }  L^\infty(B_R).
  \end{align}
  
  Finally, the first equality in \eqref{asfa} follows from \eqref{DefinitionOfAmplitude-20261005}  and \eqref{ConvergenceOfAmplitude-20261005}. The second one in \eqref{asfa} follows from \eqref{FP} and the first equality in \eqref{asfa}. This completes the proof. 
\end{proof}

\subsection{Proof of Theorem \ref{TH:SE}}
\label{proof-thm-subsec}

Take two potentials $q_1$ and $q_2$, both of which satisfy $\mathrm{Assumption}$ $\mathrm{\mathbf{(H)}}$. There are two convex domains $D_1,D_2$ in $\mathbb R^3$ such that 
\begin{align*}
    \overline{D_j} ={\rm supp} (q_j)  \subset B_R,
    ~j=1,2.
\end{align*}
For each $j \in \{1,2\}$,  denote by \(u^\infty_j\) the far-field pattern generated by plane waves incident on the inhomogeneity \((q_j, \overline{D_j})\). We shall prove \(\overline{D_1} = \overline{D_2}\) separately for the two measurement configurations, assuming in each case that the corresponding data for \(q_1\) and \(q_2\) coincide. 

(i) We first consider fixed-angle scattering data. Suppose that the scattering data for \(q_1\) and \(q_2\) coincide as follows:
\be\label{identity-s}
u^\infty_1(\theta; \pm d_0, \kappa)=u^\infty_2(\theta; \pm d_0, \kappa)
\quad\mbox{for all}~ \theta\in \s^2
\text{ and }  
\kappa\in(\kappa_{\min}, \kappa_{\max}).
\en
Then, by the meromorphic extension in Corollary \ref{Corollary:FarField-20261005}, there is a large $M>0$ such that
\begin{align*}
    u^\infty_1(\theta; \pm d_0, i \eta)=u^\infty_2(\theta; \pm d_0, i \eta)
    \quad\mbox{for all}~ \theta\in \s^2
    \text{ and }  
    \eta > M.
\end{align*}
Applying the asymptotic expansion \eqref{asfa} to both potentials and using the preceding equality, we obtain, for each \(\theta \in \mathbb S^2\) and \(d= \pm d_0\),  
\be\label{Feta} 
\int_{B_R} e^{\eta(\theta - d) \cdot x} q_1(x) (1 + o_1(1)) dx
=  \int_{B_R} e^{\eta(\theta - d) \cdot x} q_2(x) (1 + o_2(1)) dx,
~\eta > M,
\en
here the two terms $o_1(1)$ and $o_2(1)$ denote some infinitesimal functions in $L^{\infty}(B_R)$ with respect to $\eta \rightarrow +\infty$. 

     Next, we combine \eqref{Feta} with the Paley-Wiener-type theorem (Theorem \ref{thm-growth-support}) to extract information about the supports of \(q_1\) and \(q_2\). For this purpose, fix
     \begin{align}\label{ArbitraryDirections-20261003}
         \theta \in \mathbb S^2  \text{ and }   d=\pm d_0
         \text{ with }   \theta \neq d.  
     \end{align}
     Since each \(q_j\), \(j=1,2\), has a fixed sign near the boundary of its support (see $\mathrm{Assumption}$ $\mathrm{\mathbf{(H)}}$), we can apply Theorem \ref{thm-growth-support} to each potential, where
     \begin{align*}
         r=R,  ~\varphi(x) = (\theta - d) \cdot x,  ~
         \mathcal A(i\eta,x) = 1 + o_j(1), \text{ and }
         f=q_j,
     \end{align*}
     and use \eqref{Feta}  to find
     \begin{align*}
     \sup_{x\in \text{supp}(q_1)} \varphi(x)
     = \sup_{x\in \text{supp}(q_2)} \varphi(x),
     ~\text{i.e.,}~
     \sup_{x\in \overline{D_1} }    (\theta - d) \cdot x =
     \sup_{x\in \overline{D_2} }      (\theta - d) \cdot x.
     \end{align*}
     This, together with \eqref{ArbitraryDirections-20261003}, indicates that for each   $\theta \in \mathbb S^2$ and $d=\pm d_0$ with $\theta \neq d$, 
    \begin{align}\label{sup}
        \sup_{x\in D_1}   x \cdot (\theta - d) =
        \sup_{x\in D_2}     x \cdot (\theta - d).
    \end{align}
    The unit vectors \((\theta-d)/|\theta-d|\), with \(\theta \in \mathbb S^2\), \(d=\pm d_0\), and \(\theta\neq d\), cover all of \(\mathbb S^2\) except the directions orthogonal to \(d_0\). Thus, by density,
    \begin{align}\label{AllDirections-20261003}
        \sup_{x\in D_1}   x \cdot v =
        \sup_{x\in D_2}     x \cdot v
        ~\text{ for each }  v \in \mathbb S^2 .
    \end{align}
    
    Finally, because both $D_1$ and $D_2$ are convex, one can directly deduce from \eqref{AllDirections-20261003} that
    \begin{align*}
        \overline{D_1}=\overline{D_2}.
    \end{align*}
To see this, we argue by contradiction. Without loss of generality, suppose that there exists \(x_0 \in \mathbb R^3\) such that
    \begin{align*}
         x_0 \in \overline{D_1} \setminus \overline{D_2}.
    \end{align*}
    Then, since $\overline{D_2}$ is convex and closed, by the Hahn-Banach separation theorem, there is a $v_0 \in \mathbb S^2$ such that
    \begin{align*}
        \sup_{x \in D_1} x \cdot v_0 \geq x_0 \cdot v_0 > \sup_{x \in D_2} x \cdot v_0,
    \end{align*}
    which contradicts \eqref{AllDirections-20261003}. Therefore, \(\overline{D_1}=\overline{D_2}\) under the condition \eqref{identity-s}.

\vskip 5pt
(ii) We next consider backscattering data. Suppose that the scattering data for \(q_1\) and \(q_2\) coincide as follows:
\be\label{identity-s2}
u^\infty_1(\theta; -\theta, \kappa)=u^\infty_2(\theta; -\theta, \kappa) 
~~\mbox{for all}~ \theta\in \s^2
\text{ and }
\kappa\in(\kappa_{\min}, \kappa_{\max}).
\en
Then, by the meromorphic extension in Corollary \ref{Corollary:FarField-20261005}, there is a large $M>0$ such that
\begin{align*}
    u^\infty_1(\theta; -\theta, i \eta)=u^\infty_2(\theta; -\theta, i \eta)
    \quad\mbox{for all}~ \theta\in \s^2
    \text{ and }  
    \eta > M.
\end{align*}
By using the asymptotic expansion \eqref{asfa} twice, we determine from the last equality that for each $\theta \in \mathbb S^2$,
\be\label{NewFeta-20261003} 
\int_{B_R} e^{ \eta (2\theta \cdot x) } q_1(x) (1 + o_1(1)) dx
=  \int_{B_R} e^{ \eta (2\theta\cdot x) } q_2(x) (1 + o_2(1)) dx,
~\eta > M,
\en
where \(o_{1}(1)\) and \(o_{2}(1)\) denote remainders that tend to zero in \(L^{\infty}(B_{R})\) as \(\eta\to+\infty\).

Next, by an argument similar to the proof of \eqref{sup}, we apply the Paley-Wiener-type theorem (Theorem \ref{thm-growth-support}) twice and use \eqref{NewFeta-20261003} to obtain
\begin{align*}
    \sup_{x\in D_1}   x \cdot \theta =
    \sup_{x\in D_2}     x \cdot \theta
    ~\text{ for each }  \theta \in \mathbb S^2 .
\end{align*}
Because both $D_1$ and $D_2$ are convex, one can directly deduce from the last equality that $\overline{D_1}=\overline{D_2}$ (under the condition \eqref{identity-s2}). 

In summary, the proof of Theorem \ref{TH:SE} is completed.

\section{Proof of Theorem \ref{TH:near}}
\label{sec:proof of theorem near field}

This section is devoted to the proof of Theorem \ref{TH:near}, following an approach similar to that  in the proof of Theorem \ref{TH:SE} (see Section \ref{sec:proof-of-theorem-se}). To this end, we first establish the meromorphic extension and an asymptotic expansion of the total field for complex wavenumbers (Subsection \ref{sec:ps-extension-expansion}). We then present a geometric result relating a convex set to its minimum distances from points on a surrounding sphere (Subsection \ref{subsec:Support-MinimalDistance}). Finally, we prove Theorem \ref{TH:near} in Subsection \ref{sec:proof of theorem near field}. 

\subsection{Meromorphic extensions and high-frequency asymptotics}
\label{sec:ps-extension-expansion}

In this subsection, we extend the solutions of the scattering problem (1.6) to the setting of complex wavenumbers $\kappa \in \mathbb C$, and study both the meromorphic extension and the asymptotic expansion along the imaginary axis. The main results in this subsection are stated as follows. 

\begin{theorem}\label{th:ps}
    Let  $q\in L^\infty(\mathbb{R}^3)$ satisfy supp$(q)\subset B_R$ and $\operatorname{Im} q\ge 0$.  Let  $\mathcal{I}$  be the discrete set defined in \eqref{DefineSetOfPoles-20261005}, which satisfies \eqref{DiscreteWaveNumbers-20261004}.   Let $y \in \mathbb R^3$. 
    Then, the following statements are true:
    \begin{itemize}
        \item[(i)] For each $\kappa >0$, the point-source scattering problem 
        \eqref{psiw} admits a unique solution $u(\cdot; y, \kappa)$ in the space $ H^2_{\mathrm{loc}}(\R^3 \setminus \{y\} )\cap C^1\big( \mathbb R^3 \setminus ( B_R \cup \{y\}) \big)$. 
        
        Furthermore, as a function of \(\kappa\), the solution \(u(\cdot; y, \kappa)\) admits a unique meromorphic extension from \(\mathbb C\) to \(L^2_{\mathrm{loc}}(\R^3)\), with possible poles in  $\mathcal{I}$. 
         \item[(ii)]  The scattered field $u^{\mathrm{sc}}(\cdot;y,\kappa)$ admits a unique meromorphic extension from $\mathbb C$ to $C(\mathbb R^3)$, with possible poles only in \(\mathcal{I}\), given by
         \be\label{SF-20261005}
         u^{\mathrm{sc}}(x; y, \kappa) = \frac{1}{4\pi} \displaystyle\int_{B_R} \frac{e^{  i \kappa  |x-z|}}{|x-z|}
         q(z)u(z;y,\kappa) dz,
         ~ x\in\mathbb{R}^3.
         \en
        \item[(iii)] The total and scattered  fields  have the leading asymptotic expansions along the imaginary axis: when $\eta \rightarrow +\infty$, 
        \begin{eqnarray}\label{ps-asym}
            \begin{array}{r@{~}l}
                u(x; y, i\eta) &= \dfrac{e^{-\eta|x-y|}}{4\pi|x-y|} \big(1 + o(1) \big)
                ~\mbox{ in } L^\infty(B_R), 
                \vspace{0.5em}\\
                u^{\mathrm{sc}}(y; y, i\eta) &= \dfrac{1}{16\pi^2} \displaystyle\int_{B_R} \frac{e^{ - 2 \eta  |y-z|}}{|y-z|^2} q(z)  \big( 1 + o(1) \big)dz.
            \end{array}
        \end{eqnarray} 
Here, \(o(1)\) denotes a remainder that tends to zero in \(L^{\infty}(B_{R})\) as \(\eta\to+\infty\).
    \end{itemize}
\end{theorem}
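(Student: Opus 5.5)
The argument mirrors the proofs of Theorem \ref{th:sw} and Theorem \ref{lem3.3}, now with the incident field $\varPhi(\cdot-y;\kappa)$ in place of the plane wave. The scattering problem \eqref{psiw} is recast as the point-source Lippmann--Schwinger equation
\begin{align*}
u(x;y,\kappa)=\varPhi(x-y;\kappa)+\int_{B_R}\varPhi(x-z;\kappa)q(z)u(z;y,\kappa)\,dz,\quad x\in\mathbb R^3 .
\end{align*}
Since $\varPhi(\cdot-y;\kappa)|_{B_R}\in L^2(B_R)$ (the singularity $|x-y|^{-1}$ is square integrable in three dimensions) and depends entirely on $\kappa$, the restricted equation on $B_R$ reads $(I-K(\kappa))u=\varPhi(\cdot-y;\kappa)$, with the same compact family $K(\kappa)$ as in \eqref{DefinitionOfCompactOperators-20261004}. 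Steps 1--4 of the proof of Theorem \ref{th:sw} then give a unique solution in $L^2(B_R)$ for $\kappa\in\mathbb C\setminus\mathcal I$, meromorphic in $\kappa$ with poles only in $\mathcal I$. The right-hand side of the equation extends this solution to $L^2_{\mathrm{loc}}(\mathbb R^3)$.

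For $\kappa>0$, existence and uniqueness for \eqref{psiw} follow from an analogue of Lemma \ref{thm:LS-equivalence}. Its proof is the same, except that one works in $\mathbb R^3\setminus\{y\}$ and subtracts the incident field, which is an exact radiating solution of $(\Delta+\kappa^2)u^{\mathrm{in}}=-\delta_y$. The difference of two solutions solves the homogeneous problem with no source, and the Rellich and unique continuation argument of Step 3 shows that it vanishes. Part (ii) then follows because $u^{\mathrm{sc}}=u-u^{\mathrm{in}}$ is exactly the volume potential \eqref{SF-20261005}. That potential has density $qu\in L^2(B_R)$ and a kernel in $L^2_{\mathrm{loc}}$ uniformly in $x$, so it is continuous in $x$. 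Meromorphy into $C(\mathbb R^3)$ follows by composing the analytic kernel map with the meromorphic map $\kappa\mapsto qu$.

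For (iii), which I expect to be the main obstacle, I set, for $\kappa=i\eta$,
\begin{align*}
v(x):=4\pi|x-y|e^{\eta|x-y|}\,u(x;y,i\eta),
\end{align*}
so that
\begin{align*}
v(x)=1+\frac{1}{4\pi}\int_{B_R}\frac{|x-y|}{|x-z|\,|z-y|}\,e^{-\eta(|x-z|+|z-y|-|x-y|)}q(z)v(z)\,dz .
\end{align*}
The phase $|x-z|+|z-y|-|x-y|$ is nonnegative, and it vanishes only on the segment $[x,y]$. The sublevel set $\{z\in B_R:|x-z|+|z-y|-|x-y|\le\delta\}$ is a thin prolate ellipsoid around this segment. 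I expect its measure to be bounded by $C\delta$, uniformly in $x\in B_R$, after possibly restricting attention to $y$ with $|y|\ge R$. This is the analogue of \eqref{EstimateOfIntegralOverSmallSet-20261005}.

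The extra weight $|x-y|/(|x-z||z-y|)$ is harder than in the plane-wave case. It is bounded by $|x-z|^{-1}+|z-y|^{-1}$, whose square is integrable, so the Hölder argument of Theorem \ref{lem3.3} still applies. The only care needed is to make the bounds uniform in $x$, including when $y$ is close to or inside $B_R$; for $|y|\ge R$ the second term is harmless. From this I first get a uniform $L^2$ bound and then an $L^\infty$ bound on $v$. Splitting the integral into the ellipsoidal region and its complement then gives $\|v-1\|_{L^\infty(B_R)}\le C\delta^{1/2}+Ce^{-\eta\delta}$, hence $v\to1$. This proves the first expansion in \eqref{ps-asym}.

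The second expansion comes from inserting the first into \eqref{SF-20261005} with $x=y$. This yields the factor
\begin{align*}
\frac{e^{-\eta|y-z|}}{4\pi|y-z|}\cdot\frac{e^{-\eta|y-z|}}{4\pi|y-z|}=\frac{e^{-2\eta|y-z|}}{16\pi^2|y-z|^2},
\end{align*}
with the factor $1+o(1)$ uniform in $z$.
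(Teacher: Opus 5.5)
Your proposal is correct and follows the paper's proof essentially step for step. For (i)--(ii) you use the same compact analytic family $K(\kappa)$ with the point-source right-hand side and the Lippmann--Schwinger equivalence. For (iii) you use the same rescaled amplitude $v=4\pi|x-y|e^{\eta|x-y|}u$, Hilbert--Schmidt decay of the kernel to get the $L^2$ and then $L^\infty$ bounds, and the split into the ellipsoidal sublevel set and its complement.

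Your hedge about restricting to $|y|\ge R$ is unnecessary. The set $\{z:|x-z|+|z-y|-|x-y|\le\delta\}$ has volume at most $\frac{\pi}{3}(|x-y|+\delta)^2\delta$ for every $y$. Your bound $\frac{|x-y|}{|x-z|\,|z-y|}\le|x-z|^{-1}+|z-y|^{-1}$ is square integrable over $B_R$ uniformly in $x$ even when $y\in B_R$. So your argument covers all $y\in\mathbb R^3$, just as the paper's variant does (it uses $|x-y|\le R+|y|$ and absolute continuity of $\int|z|^{-2}\,dz$).
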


\begin{remark}
    The set $\mathcal{I}$ in Theorem \ref{th:ps} is the same as that in Theorem \ref{th:sw} because both results use the same analytic family of compact operators (see \eqref{DefineSetOfPoles-20261005}). This set is characterized by the eigenvalue problem introduced in \eqref{eigenfunction-20261005}.
\end{remark}

To prove Theorem \ref{th:ps}, we transform \eqref{psiw} to the Lippmann-Schwinger integral equation, and then extend this integral equation to the setting of complex wavenumbers $\kappa \in \mathbb C$. 
For this purpose, let $\varPhi(\cdot;\kappa)$ be given by \eqref{fm}.  
The Lippmann--Schwinger equation with $y \in \mathbb R^3$ and $\kappa \in \mathbb C$ reads as: 
\begin{equation}\label{eq:LS}
    u(x;y,\kappa)=\varPhi(x-y; \kappa)
    +\int_{ B_R } \varPhi(x-z; \kappa)q(z)u(z;y,\kappa)\,\mathrm{d}z,
    ~ x\in\mathbb{R}^3\setminus\{y\}.
\end{equation}
Any function $u\in L^2_{\mathrm{loc}}(\mathbb R^3)$ satisfying \eqref{eq:LS}  is said to be a solution to this equation. 

The following lemma establishes the equivalence between the scattering problem \eqref{psiw} and the Lippmann-Schwinger equation \eqref{eq:LS}, its proof is quite similar to that of Lemma \ref{thm:LS-equivalence} and is omitted here. 
\begin{lemma}
    \label{thm:equivalence-ps}
    Let $q\in L^\infty(\mathbb{R}^3)$ satisfy supp$(q)\subset B_R$. Let $y\in \mathbb R^3$ and $\kappa>0$. Then, the following two statements are true:
    \begin{itemize}
        \item[(i)] If $u\in H^2_{\mathrm{loc}}(\R^3 \setminus \{y\} )\cap C^1\big( \mathbb R^3 \setminus ( B_R \cup \{y\}) \big)$ is a solution of the scattering problem \eqref{psiw}, then $u$ solves the Lippmann--Schwinger equation \eqref{eq:LS}. 
        \item[(ii)] If $u\in L_{\mathrm{loc}}^2(\mathbb{R}^3)$ solves the Lippmann--Schwinger equation \eqref{eq:LS}, then $u\in H^2_{\mathrm{loc}}(\R^3 \setminus \{y\} )\cap C^1\big( \mathbb R^3 \setminus ( B_R \cup \{y\}) \big)$ and it is a solution of the scattering problem \eqref{psiw}.
    \end{itemize}
\end{lemma}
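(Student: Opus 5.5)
The plan is to decompose $u = u^{\mathrm{in}} + u^{\mathrm{sc}}$ with $u^{\mathrm{in}} = \varPhi(\cdot - y;\kappa)$ and prove each direction of the equivalence, following the proof of Lemma \ref{thm:LS-equivalence} (Appendix \ref{sec:proof-of-ls-equivalence}). The singular incident field is fixed data: $\varPhi(\cdot-y;\kappa)$ is smooth on $\mathbb R^3\setminus\{y\}$, solves $(\Delta+\kappa^2)\varPhi(\cdot-y;\kappa) = -\delta_y$, satisfies the radiation condition, and lies in $L^2_{\mathrm{loc}}(\mathbb R^3)$ because $|x-y|^{-1}$ is locally square integrable in three dimensions. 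Every statement about $u$ therefore reduces to one about $u^{\mathrm{sc}} = u - \varPhi(\cdot-y;\kappa)$. The only new issue compared with plane-wave incidence is that $y$ may lie inside $B_R$, so the source term $q\,\varPhi(\cdot-y;\kappa)$ is only in $L^2_{\mathrm{loc}}$, not in $L^\infty$.

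For (ii), let $u\in L^2_{\mathrm{loc}}(\mathbb R^3)$ solve \eqref{eq:LS}. Then $qu\in L^2(B_R)$ with compact support, so $u^{\mathrm{sc}} = \int_{B_R}\varPhi(\cdot-z;\kappa)q(z)u(z)\,dz$ is the Newtonian-type potential studied in Step 1 of the proof of Theorem \ref{th:sw} (see \eqref{ExtensionOfw-20261004}--\eqref{QuationForExtensionOfw-20261004}). Hence $u^{\mathrm{sc}}\in H^2_{\mathrm{loc}}(\mathbb R^3)$ and $(-\Delta-\kappa^2)u^{\mathrm{sc}} = qu$ in $L^2(\mathbb R^3)$. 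Outside $\overline{B_R}$, $u^{\mathrm{sc}}$ is a real-analytic radiating solution of the Helmholtz equation, because we may differentiate under the integral and use the asymptotics of $\varPhi$. This gives the $C^1$ regularity on $\mathbb R^3\setminus(B_R\cup\{y\})$ and the Sommerfeld condition. Adding the incident field, $u\in H^2_{\mathrm{loc}}(\mathbb R^3\setminus\{y\})$ satisfies $(\Delta+\kappa^2+q)u = -\delta_y$ away from $y$, in the sense of \eqref{psiw}.

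For (i), let $u$ solve \eqref{psiw} in the stated class, and set $w := u^{\mathrm{sc}} - \int_{B_R}\varPhi(\cdot-z;\kappa)q(z)u(z)\,dz$. We first need $qu\in L^2(B_R)$, i.e.\ $u\in L^2$ near $y$. Since $y$ may lie in $B_R$, this is where care is required, and I treat it as the main obstacle. The approach is to use the equation for $u^{\mathrm{sc}}$ across $y$: the difference $(\Delta+\kappa^2)u^{\mathrm{sc}} + qu$ is supported at $\{y\}$. If $u^{\mathrm{sc}}$ has the local regularity implicit in the definition of solution (namely $u^{\mathrm{sc}}\in H^2_{\mathrm{loc}}(\mathbb R^3)$ when the problem is interpreted with the $\delta$ on the right), then $qu = q\,u^{\mathrm{sc}} + q\,\varPhi(\cdot-y;\kappa)\in L^2_{\mathrm{loc}}$. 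If only regularity off $y$ is available, I would invoke the removable singularity theorem for $H^2$-type solutions: an $L^2_{\mathrm{loc}}$ distributional solution of an elliptic equation off a point, with a distribution supported at that point of order at most zero, differs from a regular solution by a multiple of the fundamental solution. Matching the $\delta_y$ coefficient then shows that $u^{\mathrm{sc}}$ has no singular part.

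Once $qu\in L^2(B_R)$ is established, $w$ satisfies $(\Delta+\kappa^2)w=0$ in $\mathbb R^3$ in the distributional sense, and $w$ is radiating, since both terms are. By Weyl's lemma $w$ is an entire solution. Green's representation formula (or Rellich's lemma combined with unique continuation) then gives $w\equiv 0$, which is exactly \eqref{eq:LS}.
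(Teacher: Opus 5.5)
Part (ii) matches the paper's argument (the paper omits this proof and refers to Appendix~\ref{sec:proof-of-ls-equivalence}). One small correction: the required $C^1$ regularity includes the sphere $\partial B_R$. This holds because $\operatorname{supp}(q)$ is a compact subset of the open ball, so $u^{\mathrm{sc}}$ is smooth on a neighbourhood of $\mathbb R^3\setminus B_R$, not just outside $\overline{B_R}$.

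For (i) you take a different route from the paper's template. The paper uses Green's representation on a ball, cancelling the boundary terms with the interior Green formula for $u^{\mathrm{in}}$ and the exterior formula plus the radiation condition for $u^{\mathrm{sc}}$. You instead subtract the volume potential and show that the difference $w$ is a radiating entire solution of the Helmholtz equation, hence zero. That is the better choice here: it needs no boundary integrals and does not depend on where $y$ lies. A literal transcription of Appendix~\ref{sec:proof-of-ls-equivalence} needs $u\in H^2$ on the ball used in Green's identity, and a modified \eqref{eq:green-inc} whenever $y$ lies in that ball.

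The gap is the step at $y$ that you flag, which neither of your two options resolves. The first option assumes $u^{\mathrm{sc}}\in H^2_{\mathrm{loc}}(\mathbb R^3)$, which is not a hypothesis of the lemma and is not implicit in it. The second option presupposes exactly what is missing. If the equation is known only on $\mathbb R^3\setminus\{y\}$, then, even if $u$ extends to a distribution, $(\Delta+\kappa^2)u^{\mathrm{sc}}+qu$ is only known to be some $\sum_\alpha c_\alpha\partial^\alpha\delta_y$ of unknown order. There is no $L^2$ control of $u^{\mathrm{sc}}$ at $y$, and matching the $\delta_y$ coefficient removes only $c_0$. Under that reading (i) is actually false. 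For $q\equiv 0$, take $u=\varPhi(\cdot-y;\kappa)+\partial_{x_1}\varPhi(\cdot-y;\kappa)$. It lies in $H^2_{\mathrm{loc}}(\mathbb R^3\setminus\{y\})\cap C^1(\mathbb R^3\setminus\{y\})$, solves the homogeneous equation off $y$, and has a radiating scattered part, yet it violates \eqref{eq:LS}. It is excluded only because $(\Delta+\kappa^2)u=-\delta_y-\partial_{x_1}\delta_y$ in $\mathcal D'(\mathbb R^3)$. Note that here $y\notin\operatorname{supp}(q)$, so the real issue is not $qu\in L^2$ but whether the equation holds across $y$.

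The fix is to read the first line of \eqref{psiw} in $\mathcal D'(\mathbb R^3)$. For $qu$ to make sense this presupposes $u\in L^1_{\mathrm{loc}}(\mathbb R^3)$, and then no removable-singularity theorem is needed. Indeed, $(\Delta+\kappa^2)u^{\mathrm{sc}}=-qu$ then holds in $\mathcal D'(\mathbb R^3)$ with no point defect, and you do not need $qu\in L^2$ in advance. Set $g:=qu\in L^1$, supported in $\operatorname{supp}(q)$. Since $\varPhi\in L^p_{\mathrm{loc}}$ for $p<3$, Young's inequality gives $\varPhi(\cdot;\kappa)*g\in L^p_{\mathrm{loc}}(\mathbb R^3)$ for all $p<3$. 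This potential solves $(\Delta+\kappa^2)v=-g$ in $\mathcal D'(\mathbb R^3)$ and is radiating. So your $w$ is a distributional solution, hence smooth by elliptic regularity, radiating and entire, and therefore vanishes. This gives \eqref{eq:LS}. Moreover $u=\varPhi(\cdot-y;\kappa)+\varPhi(\cdot;\kappa)*g\in L^2_{\mathrm{loc}}(\mathbb R^3)$, which is the membership required in the definition of a solution of \eqref{eq:LS}; bootstrapping then yields $qu\in L^2$. Correspondingly, in (ii) you should state that $(\Delta+\kappa^2+q)u=-\delta_y$ holds in $\mathcal D'(\mathbb R^3)$, not merely away from $y$. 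It does, since $u\in L^2_{\mathrm{loc}}(\mathbb R^3)$, and this is the reading under which (i) is true.
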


We are now in a position to prove Theorem \ref{th:ps}.
\begin{proof}[Proof of Theorem \ref{th:ps}]
We prove statements (i)--(iii) in turn.

\vskip 5pt
(i) We adopt the ideas from the proof of Theorem \ref{th:sw} and reuse  partial arguments there. Let \(K(\kappa)\), \(\kappa \in \mathbb C\), be the operator defined in \eqref{DefinitionOfCompactOperators-20261004}. As shown in the proof of Theorem \ref{th:sw}, the map \(\kappa \mapsto K(\kappa)\) is analytic from \(\mathbb C\) to \(\mathcal L( L^2(B_R) )\) and takes values in the compact operators on \(L^2(B_R)\). Furthermore, $I - K(\kappa)$ is invertible for each $\kappa \in \mathbb C \backslash \mathcal{I}$. 

Now, observe from \eqref{fm}  that $\varPhi(\cdot-y; \kappa) \in L^2_{\mathrm{loc}}(\mathbb R^3)$ for each $\kappa \in \mathbb C$. Then, we deduce from the above argument and \eqref{DefinitionOfCompactOperators-20261004} that the Lippmann-Schwinger equation \eqref{eq:LS} has a unique solution in $L^2_{\mathrm{loc}}(\mathbb R^3)$ for each $\kappa \in \mathbb C \backslash \mathcal{I}$. Moreover, this solution is analytic in the variable $\kappa$. 

In particular, equation \eqref{eq:LS} has a unique solution $u(\cdot;y,\kappa)$ in $L^2_{\mathrm{loc}}(\mathbb R^3)$ for each $\kappa>0$. Then, by Lemma \ref{thm:equivalence-ps}, this solution $u(\cdot;y,\kappa)$ is also the unique solution to the scattering problem \eqref{psiw} with $\kappa>0$.

Furthermore, the solution \(u(\cdot;y,\kappa)\), as a function of $\kappa$, extends analytically to \(\mathbb C\backslash \mathcal{I}\), where it solves \eqref{eq:LS} for complex wavenumbers. The desired conclusions follow.

\vskip 5pt
(ii) Statement (ii) follows directly from part (i), Lemma \ref{thm:equivalence-ps}, and \eqref{eq:LS}.

\vskip 5pt
(iii) We adopt the ideas from the proof of Theorem \ref{lem3.3}. Fix an arbitrary sufficiently large \(\eta> 0\). Set
\begin{align}\label{ps:DefinitionOfAmplitude-20261005}
    v(x; y, i\eta) := u(x; y, i \eta)  \Big( 4\pi|x-y| e^{  \eta |x-y|} \Big), ~ x\in B_R . 
\end{align}
Introduce the following operator $\widehat{K}(\eta) $ in $\mathcal{L}( L^2(B_R))$: for each $f \in L^2(B_R)$, 
\begin{align}\label{ps:NewIntegralOperator-20261005}
    \big( \widehat{K}(\eta)  f \big) (x) :=
    \frac{|x-y|}{4\pi}
    \int_{ B_R }
    \frac{e^{-\eta[|x-z|+|y-z|-|x-y|]}}{|x-z|\,|y-z|}
    q(z) f(z) dz,
    ~ x \in B_R. 
\end{align}
Since $u(\cdot;y,i\eta)$ solves equation \eqref{eq:LS}, the function $v$ satisfies
\begin{align}\label{ps:IntegralEquationForAmplitude-20261005}
    v(x;y, i\eta ) = 1 + \Big( \widehat{K}(\eta) v(\cdot;y,i\eta) \Big)(x) ,
    ~ x \in B_R.
\end{align}
A direct calculation shows that the kernel of \(\widehat{K}(\tau)\) converges to $0$ in \(L^2(B_R \times B_R)\) as \(\tau \rightarrow +\infty\), which implies
\begin{align*}
    \limsup_{\tau\rightarrow+\infty}  \| \widehat{K}(\tau) \|_{ \mathcal{L}( L^2(B_R)) } =0.
\end{align*}
From this and \eqref{ps:IntegralEquationForAmplitude-20261005}, we determine
\begin{align}\label{ps:UniformL2Bound-20261005}
    \lim_{\tau\rightarrow+\infty}v(\cdot; y, i\tau) =1
    \text{ in }  L^{2}(B_R).
\end{align}

Now, for each $x\in B_R$ and $\delta>0$, the volume of the intersection with \(B_R\) of the ellipsoid with major semiaxis \(\frac{|x-y|+\delta}{2}\) and focal radius \(\frac{|x-y|}{2}\)
\begin{eqnarray}\label{ps:Omega-delta-epsilon}
    \Omega_\delta(x,y):= \big\{ z\in B_R ~:~ |x-z|+|y-z|-|x-y| \leq \delta \big\}
\end{eqnarray}
is bounded by $ \frac{\pi}{3} (|x-y|+\delta)^2 \delta $. 
We deduce from \eqref{ps:IntegralEquationForAmplitude-20261005} and \eqref{ps:NewIntegralOperator-20261005} that when $ x \in B_R$ and $\delta>0$, 
\begin{align*}
       | v(x;y,i\eta) -1| \leq&
    \frac{ R + |y| }{4\pi}  \|q\|_{L^\infty(B_R)}  \| v(\cdot; y, i\eta) \|_{ L^{\infty}(B_R) }   \times 
    \nonumber\\
    &   \bigg(
    \int_{\Omega_\delta(x,y)}      \frac{1}{|x-z|\,|y-z|}     dz
    + e^{-\eta \delta}   \int_{B_R \setminus \Omega_\delta(x,y)} 
         \frac{1}{|x-z|\,|y-z|}     dz
    \bigg)
    \nonumber\\
    \leq&
    \frac{ R + |y| }{4\pi}  \|q\|_{L^\infty(B_R)}  \| v(\cdot; y, i\eta) \|_{ L^{\infty}(B_R) }   \times 
    \nonumber\\
    &   \bigg(
    \int_{ (x - \Omega_\delta(x,y) ) \cup (y - \Omega_\delta(x,y) )}      \frac{1}{|z|^2}     dz
    + e^{-\eta \delta}   \int_{B_{2R+|y|} } 
    \frac{1}{|z|^2}     dz
    \bigg)
\end{align*}
where Hölder's inequality is applied twice in the last step. 
Meanwhile, by using \eqref{ps:IntegralEquationForAmplitude-20261005},  \eqref{ps:NewIntegralOperator-20261005}, and \eqref{ps:UniformL2Bound-20261005}, we obtain
\begin{align*}
    \limsup_{\tau\rightarrow+\infty} \| v(\cdot; y, i\tau) \|_{ L^{\infty}(B_R) }
    < +\infty.
\end{align*}
Thus, by the absolute continuity of the Lebesgue integral,  the last two inequalities yield
\begin{align*}
    \lim_{\tau\rightarrow+\infty}v(\cdot; y, i\tau) =1
    \text{ in }  L^{\infty}(B_R).
\end{align*}
This, along with \eqref{ps:DefinitionOfAmplitude-20261005}, gives the first equality in \eqref{ps-asym}. The second one in \eqref{ps-asym} follows from the first one and \eqref{SF-20261005}. This completes the proof.     
\end{proof}

\subsection{Support and minimal distances}
\label{subsec:Support-MinimalDistance}

This subsection presents a geometric result that characterizes a convex set by its minimum distances from points on a surrounding sphere.

\begin{lemma}\label{convex-equal}
    Let $R>0$ and let $D_1,D_2$ be two convex domains in $\mathbb R^n$ satisfying
   \begin{align}\label{lem:ContainedInBall-20261004}
       \overline{D_1}  \cup  \overline{D_2}  \subset B_{\mathbb R^n}(0,R),
   \end{align}
   where $B_{\mathbb R^n}(0,R)$ is the open ball centered at \(0\) with radius \(R\). 
  Suppose that
  \begin{align}\label{lem:TestingData-20261004}
      \min_{x \in \overline{D_1}} |x - y| = \min_{x \in \overline{D_2}} |x - y|
      \text{ when }  |y| =R.
  \end{align}
  Then \(\overline{D_1} = \overline{D_2}\). 
\end{lemma}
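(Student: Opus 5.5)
We argue by contradiction, in the same spirit as the Hahn--Banach argument that closed the proof of Theorem \ref{TH:SE}. Suppose, without loss of generality, that there is a point $x_0 \in \overline{D_1}\setminus\overline{D_2}$. Since $\overline{D_2}$ is compact and convex, it has a nearest-point projection $p := P_{\overline{D_2}}(x_0)$. Set $\nu := (x_0-p)/|x_0-p| \in \mathbb S^{n-1}$. The variational characterization of the projection gives $(x-p)\cdot\nu \le 0$ for all $x\in\overline{D_2}$. So the hyperplane through $p$ with normal $\nu$ supports $\overline{D_2}$, while $x_0$ lies strictly on the other side, at distance $|x_0-p|>0$.

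Next, we choose the observation point on the sphere by going out from $p$ along $\nu$. Because $p\in\overline{D_2}\subset B(0,R)$, the ray $t\mapsto p+t\nu$, $t\ge 0$, meets the sphere $\{|y|=R\}$ at exactly one parameter $t_*>0$. Put $y_*:=p+t_*\nu$. By construction $t_*>0$, and since $x_0 = p+|x_0-p|\,\nu\in\overline{D_1}\subset B(0,R)$, we also have $t_*>|x_0-p|$. Now compute the two distances.
\begin{itemize}
\item[(a)] For $x\in\overline{D_2}$ we have $(x-p)\cdot\nu\le0$, hence $|y_*-x|^2 = |t_*\nu-(x-p)|^2 = t_*^2 - 2t_*(x-p)\cdot\nu + |x-p|^2 \ge t_*^2$. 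Equality holds at $x=p$, so $\min_{x\in\overline{D_2}}|x-y_*| = t_*$.
\item[(b)] For $\overline{D_1}$ we get $\min_{x\in\overline{D_1}}|x-y_*|\le |x_0-y_*| = t_*-|x_0-p| < t_*$.
\end{itemize}
These two facts contradict \eqref{lem:TestingData-20261004} at $y=y_*$. Hence $\overline{D_1}\subset\overline{D_2}$, and by symmetry the two sets are equal.

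The only delicate point is making sure the chosen sphere point $y_*$ really sees $p$ as the nearest point of $\overline{D_2}$ and lies beyond $x_0$ along the ray. Convexity of $\overline{D_2}$ handles the first requirement through the projection inequality. The containment \eqref{lem:ContainedInBall-20261004} in the open ball handles the second, since it ensures $t_*>|x_0-p|$. Convexity of $D_1$ is not needed in this direction; it is used only when the roles of $D_1$ and $D_2$ are swapped.
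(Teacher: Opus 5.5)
Your proof is correct and is essentially the paper's argument. The paper separates $z_0\in\overline{D_1}\setminus\overline{D_2}$ from $\overline{D_2}$ by Hahn--Banach and follows the ray $z_0+t\mathbf n$ to the sphere; with $\mathbf n=\nu$ this ray reaches exactly your point $y_*$, since $x_0$ lies on your ray from $p$. The only cosmetic differences are that you get the separating normal from the nearest-point projection and compute $\min_{\overline{D_2}}|x-y_*|=t_*$ exactly, whereas the paper bounds this minimum below via $|x-y|\ge (y-x)\cdot\mathbf n$.
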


\begin{proof}
We prove the statement by contradiction. Assume that $\overline{D_1} \neq \overline{D_2}$. Without loss of generality, suppose $\overline{D_1} \backslash \overline{D_2} \neq \emptyset$;  otherwise, we swap the roles of $\overline{D_1}$ and $\overline{D_2}$. Choose a point 
\begin{align}\label{SeparationPoint-20261004}
    z_0 \in \overline{D_1} \backslash \overline{D_2}.
\end{align}
Since \(\overline{D_2}\) is closed and convex, \eqref{SeparationPoint-20261004} and the Hahn-Banach separation theorem yield a unit vector \(\mathbf{n} \in \mathbb{R}^n\)  and a real number $\alpha$ such that
\begin{align}\label{eq:sep2}
    \sup_{ x \in \overline{D_1} }   x \cdot  \mathbf{n}
   \geq  z_0 \cdot  \mathbf{n} > \alpha \geq
\sup_{ x \in \overline{D_2} }   x \cdot  \mathbf{n}.
\end{align}
This means that $\overline{D_2}$ lies entirely in the half-space $\{ z  ~:~  z \cdot \mathbf{n}  \leq \alpha \}$, while $z_0$ is strictly on the opposite side. 

We now construct a point \(y \in \mathbb R^n\) with \(|y|=R\) for which \eqref{lem:TestingData-20261004} does not hold. For this purpose, consider the ray emanating from $z_0$ in the direction $\mathbf{n}$:
\begin{align}\label{RayFromBasePointAlongSeparationDirection-20261004}
    y(t) := z_0 + t \mathbf{n}, ~ t \geq 0.
\end{align}
Since $z_0 \in \overline{D_1} \subset B_{\mathbb R^n}(0,R)$ (see \eqref{lem:ContainedInBall-20261004}), there is a unique number $t_0 > 0$ such that
\begin{align*}
    R= | y(t_0) |  = | z_0 + t_0 \mathbf{n}|.
\end{align*}
This implies
\begin{align}\label{LowerBoundOfD1-20261004}
    \min_{x \in \overline{D_1}} |x - y(t_0)|  \leq  |z_0 - y(t_0)|
    = t_0. 
\end{align}
On the other hand, \eqref{RayFromBasePointAlongSeparationDirection-20261004} implies that, for each \(x \in \overline{D_2}\), 
\begin{align*}
   |x - y(t_0)| \geq  ( y(t_0) -x ) \cdot \mathbf{n}
   = (t_0 + z_0 \cdot \mathbf{n} ) - x \cdot \mathbf{n},
\end{align*}
which, together with \eqref{eq:sep2}, yields
\begin{align*}
    \min_{x \in \overline{D_2}} |x - y(t_0)|  > t_0.
\end{align*}
This, along with \eqref{LowerBoundOfD1-20261004}, indicates
\begin{align*}
    \min_{x \in \overline{D_2}} |x - y(t_0)|  >  \min_{x \in \overline{D_1}} |x - y(t_0)|,
\end{align*}
which contradicts \eqref{lem:TestingData-20261004} with $y=y(t_0)$. Therefore, \(\overline{D_1} = \overline{D_2}\), which completes the proof. 
\end{proof}

\subsection{Proof of Theorem \ref{TH:near}}
\label{sec:proof of theorem near field}

Take two potentials $q_1$ and $q_2$, both of which satisfy $\mathrm{Assumption}$ $\mathrm{\mathbf{(H)}}$. There are two convex domains $D_1,D_2$ in $\mathbb R^3$ such that 
\begin{align}\label{Supports-20261004}
    \overline{D_j} ={\rm supp} (q_j)  \subset B_R,
    ~j=1,2.
\end{align}
For each $j \in \{1,2\}$,  denote by \(u^{\mathrm{sc}}_j\) the scattered field generated by point-source incidence on the inhomogeneity \((q_j, \overline{D_j})\). 

 We aim to verify $\overline{D_1} = \overline{D_2}$ from identical point-source scattering data of $q_1,q_2$.  Suppose that the two potentials $q_1,q_2$ share the 
following same  scattering data:  
\be\label{identity-PointSource}
u^{\mathrm{sc}}_1(y; y, \kappa)=u^{\mathrm{sc}}_2(y; y, \kappa)
\quad\mbox{when}~ |y| = R
\text{ and }  
\kappa\in(\kappa_{\min}, \kappa_{\max}).
\en
By (ii) of Theorem \ref{th:ps}, there is a large $M>0$ such that
\begin{align*}
    u^{\mathrm{sc}}_1(y; y, i \eta)=u^{\mathrm{sc}}_2(y; y, i \eta)
    \quad\mbox{when}~ |y| = R
    \text{ and }  
    \eta > M.
\end{align*}
Applying the asymptotic expansion in (iii) of Theorem \ref{th:ps} to both potentials and using the preceding equality, we obtain, for \(|y| = R\),
\be\label{Feta-PointSource} 
\int_{B_R} e^{ - 2\eta |x-y| } \frac{q_1(x)}{|x-y|^2}  (1 + o_1(1)) dx
=  \int_{B_R} e^{ - 2\eta |x-y| } \frac{q_2(x)}{|x-y|^2}  (1 + o_2(1)) dx,
~\eta > M,
\en
where \(o_{1}(1)\) and \(o_{2}(1)\) denote remainders that tend to zero in \(L^{\infty}(B_{R})\) as \(\eta\to+\infty\).

Next, we combine \eqref{Feta-PointSource} with the Paley-Wiener-type theorem (Theorem \ref{thm-growth-support}) to extract information about the supports of \(q_1\) and \(q_2\). For this purpose, fix a point \(y \in \mathbb R^3\) satisfying
\begin{align}\label{ArbitraryPoint-20261004}
    |y| = R.  
\end{align}
By \eqref{Supports-20261004}, there is a small number $\varepsilon_0 \in (0,R)$ such that
\begin{align*}
    \text{supp}(q_1) \cup \text{supp}(q_2)   \subset  B_{R-\varepsilon_0}.
\end{align*}
Since each \(q_j\), \(j=1,2\), has a fixed sign near the boundary of its support (see $\mathrm{Assumption}$ $\mathrm{\mathbf{(H)}}$), we can apply Theorem \ref{thm-growth-support} to each potential, where
\begin{align*}
    r=R-\varepsilon_0,  ~\varphi(x) = -2|x-y|,  ~
    \mathcal A(i\eta,x) = 1 + o_j(1), \text{ and }
    f=\frac{q_j(x)}{|x-y|^2},
\end{align*}
and use \eqref{Feta-PointSource}  to find
\begin{align*}
    \sup_{x\in \text{supp}(q_1)} \varphi(x)
    = \sup_{x\in \text{supp}(q_2)} \varphi(x),
    ~\text{i.e.,}~
    \inf_{x\in \overline{D_1} }   |x - y| =
    \inf_{x\in \overline{D_2} }    |x - y|.
\end{align*}
This, together with \eqref{ArbitraryPoint-20261004}, indicates 
\begin{align}\label{sup-PointSource}
    \min_{x\in \overline{D_1} }   |x - y| =
    \min_{x\in \overline{D_2} }    |x - y|
    \text{ when }
    |y| =R.
\end{align}

Finally, it follows from Lemma \ref{convex-equal} and \eqref{sup-PointSource} that $\overline{D_1}=\overline{D_2}$ (under the condition \eqref{identity-PointSource}). The proof is completed. 

\section{Concluding remarks and future work}
\label{sec:Concluding remarks}

This paper has developed a novel approach to proving uniqueness for inverse scattering problems for the time-harmonic Schr\"odinger equation. Our methodology integrates two key components: 
(i) a Paley-Wiener-type theorem (Theorem \ref{thm-growth-support}) characterizing the support of \(q\) through the asymptotic growth of the Fourier transform of \(qu\) along imaginary frequencies; and (ii) the use of multi-frequency measurement data together with their analytic dependence on frequency. Within this framework, we have established two uniqueness results for the support \(\overline{D} = \operatorname{supp}(q)\): one using fixed-angle or backscattering far-field data under plane-wave incidence, and the other using near-field backscattering data under point-source incidence. Moreover, the potentials considered in this paper may be complex-valued, a case that has rarely been investigated in previous work.

A possible extension is to establish a more general Paley-Wiener-type theorem without the sign-definiteness condition near the support boundary. In this paper, the sign-definiteness condition guarantees the existence of the limit describing the asymptotic growth rate. Our preliminary analysis suggests that this fixed-sign assumption is a technical artifact of the current proof rather than an intrinsic limitation of the method. For inverse source problems, it appears possible to remove the sign-definite condition. Another direction is to use the Paley-Wiener-type theorem to treat the more challenging case of phaseless data. Such an extension would enhance the practical applicability of our theoretical framework to experiments in which only intensity measurements are available.

Building on these uniqueness results, we are developing numerical algorithms for reconstructing the support of the potential. The Paley-Wiener-type theorem can also be used to reconstruct the potential under additional smoothness assumptions on \(q\). It is also possible to extend the uniqueness results to the Helmholtz equation $\Delta u + \kappa^2 q u = 0$; such an extension requires a careful analysis of the high-frequency asymptotics of the wave equation. 

\clearpage

\setcounter{section}{0}
\setcounter{equation}{0}
\renewcommand{\thesection}{\Alph{section}}
\renewcommand{\theequation}{\thesection.\arabic{equation}}

\section*{Appendix}
\addcontentsline{toc}{section}{Appendix}

\section{Proof of \eqref{growth-pw}}
\label{sec:proof-of-growth-pw}

We first recall the following classical one-dimensional Paley-Wiener theorem; see, e.g., \cite[Chapter III D, pp.~30--37]{PK}.

\begin{lemma}
\label{lem:1d}
Let $v\in L^2(\mathbb R)$ have compact support. Then
$$
\sup\operatorname{supp}v
=
\limsup_{\eta\to+\infty}\frac1\eta\log|\hat v(i\eta)| ,
$$
where $\hat v$ is the Fourier transform of $v$ in dimension $1$.
\end{lemma}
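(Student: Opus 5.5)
We write $\hat v(\zeta)=\int_{\mathbb R}e^{-ix\zeta}v(x)\,dx$, so that $\hat v(i\eta)=\int e^{\eta x}v(x)\,dx$. We assume $v\neq0$; otherwise both sides are $-\infty$ with the convention $\log 0=-\infty$. Set $b:=\sup\operatorname{supp}v$ and $c:=\inf\operatorname{supp}v$. Since $v\in L^2$ has compact support, $v\in L^1$ and $\hat v$ is entire.

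\emph{Upper bound.} This is immediate:
\[
|\hat v(i\eta)|\le \int_{c}^{b} e^{\eta x}|v(x)|\,dx\le e^{\eta b}\|v\|_{L^1}.
\]
Hence the $\limsup$ is at most $b$.

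\emph{Lower bound, by contradiction.} Suppose the $\limsup$ equals $a<b$, and fix $a'\in(a,b)$. Then $|\hat v(i\eta)|\le e^{a'\eta}$ for all large $\eta$. By continuity of $\hat v$ on the remaining compact piece of the positive imaginary axis, $|\hat v(i\eta)|\le C e^{a'\eta}$ for all $\eta\ge0$. Define the entire function
\[
h(\zeta):=\hat v(\zeta)\,e^{ia'\zeta},\qquad |e^{ia'\zeta}|=e^{-a'\operatorname{Im}\zeta}.
\]
It has the following properties:
\begin{itemize}
\item[(a)] On the positive imaginary axis, $|h|\le C$ by the assumption just derived.
\item[(b)] On the real axis, $|h(\xi)|=|\hat v(\xi)|\le\|v\|_{L^1}$.
\item[(c)] In the closed upper half-plane, $|h(\zeta)|\le \|v\|_{L^1}e^{(b-a')\operatorname{Im}\zeta}$, so $h$ has exponential type there.
\end{itemize}

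\emph{Phragm\'en--Lindel\"of step.} Apply the Phragm\'en--Lindel\"of principle separately in the first quadrant $\{\operatorname{Re}\zeta\ge0,\operatorname{Im}\zeta\ge0\}$ and the second quadrant. Each is a sector of opening $\pi/2$, in which a function of order $1<2$ that is bounded on the two boundary rays is bounded inside. Hence $h$ is bounded in the closed upper half-plane, that is,
\[
|\hat v(\zeta)|\le C' e^{a'\operatorname{Im}\zeta}\qquad\text{for }\operatorname{Im}\zeta\ge0 .
\]
In the lower half-plane the trivial estimate gives $|\hat v(\xi-i\eta)|\le\|v\|_{L^1}e^{-c\eta}$.

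\emph{Conclusion.} Combining the two half-planes, $|\hat v(\zeta)|\le C''e^{H(\operatorname{Im}\zeta)}$, where $H$ is the supporting function of $K=[c,a']$. By Proposition \ref{PW} with $N=0$, $v$ is supported in $[c,a']$. This contradicts $a'<b=\sup\operatorname{supp}v$, so the $\limsup$ equals $b$.

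The main obstacle is the Phragm\'en--Lindel\"of step. Its hypotheses must be checked carefully: boundedness on both boundary rays of each quadrant (properties (a) and (b)) and the exponential-type growth bound inside (property (c)). The mild adjustment from "for large $\eta$" to "for all $\eta\ge0$" in (a) is handled by continuity of $\hat v$, as noted above.
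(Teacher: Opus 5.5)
The paper does not prove this lemma. It is quoted from the classical one-dimensional Paley--Wiener theory, so your argument supplies a proof where the paper gives only a citation, and it is correct.

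The upper bound is the trivial estimate. For the lower bound, you multiply by $e^{ia'\zeta}$ and combine three bounds: on $\mathbb R$, on $i\mathbb R_+$, and exponential type in the upper half-plane. These let you apply Phragm\'en--Lindel\"of in each quadrant, which is legitimate since the opening is $\pi/2$ and order $1<2$ is admissible. This upgrades growth along the single ray $i\mathbb R_+$ to the bound $|\hat v(\zeta)|\le C'e^{a'\operatorname{Im}\zeta}$ on the whole closed upper half-plane. You then close with Proposition~\ref{PW}. That is not circular: the paper imports Proposition~\ref{PW} from H\"ormander independently and only afterwards uses it together with Lemma~\ref{lem:1d} to prove Proposition~\ref{ndpw}.

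Compared with the citation, your route makes the appendix rest only on Proposition~\ref{PW} and the Phragm\'en--Lindel\"of principle. It also exposes the mechanism behind the classical statement: an exponential-type function bounded on $\mathbb R$ has its half-plane growth determined by its growth along $i\mathbb R_+$. This is why no supremum over real shifts is needed in one dimension.

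One small slip: the interval $[c,a']$ is empty if $a'<c$, and then it is not a valid $K$ in Proposition~\ref{PW}. Either of two fixes works.
\begin{itemize}
\item Choose $a'\in(\max\{a,c\},b)$. This is possible because $c<b$ whenever $v\neq 0$, since an $L^2$ function supported at a single point vanishes.
\item Use $K=[\min\{c,a'\},a']$ instead. This works because $e^{c\operatorname{Im}\zeta}\le e^{\min\{c,a'\}\operatorname{Im}\zeta}$ for $\operatorname{Im}\zeta\le 0$.
\end{itemize}
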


Using this lemma, we prove the following proposition, which gives \eqref{growth-pw}.

\begin{proposition}
\label{ndpw}
Let $u\in L^2(\mathbb R^n)$ have compact support, and let \(e\in\mathbb S^{n-1}\). Set
$$
h:=\sup_{x\in\operatorname{supp}u}x\cdot e .
$$
Then
$$
h=
\sup_{\xi\in\mathbb R^n}
\limsup_{\eta\to+\infty}
\frac1\eta\log|\hat u(\xi+i\eta e)|,
$$
where $\hat u$ is the Fourier transform of $u$ in dimension $n$.
\end{proposition}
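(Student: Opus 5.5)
We prove the two inequalities separately, reducing each to the one-dimensional Lemma \ref{lem:1d} by integrating over hyperplanes orthogonal to $e$. Use the convention $\hat u(\zeta)=\int_{\mathbb R^n} e^{-ix\cdot\zeta}u(x)\,dx$. With it, $|e^{-ix\cdot(\xi+i\eta e)}|=e^{\eta x\cdot e}$, so growth in the direction $+e$ detects $\sup x\cdot e$, consistent with Lemma \ref{lem:1d}. If the paper's convention differs by a sign, replace $e$ by $-e$ throughout.

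\emph{Upper bound.} Fix $\xi\in\mathbb R^n$. For every $\eta>0$,
\begin{align*}
|\hat u(\xi+i\eta e)|\le \int_{\operatorname{supp}u} e^{\eta x\cdot e}|u(x)|\,dx\le e^{\eta h}\|u\|_{L^1}.
\end{align*}
Here $\|u\|_{L^1}<\infty$ because $u\in L^2$ has compact support. Taking $\frac1\eta\log$ and then $\limsup_{\eta\to+\infty}$ gives at most $h$ for each $\xi$. Hence the supremum over $\xi$ is at most $h$.

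\emph{Lower bound.} Choose coordinates $x=se+x'$ with $x'\perp e$, and split $\xi=\xi_e e+\xi'$ with $\xi'\perp e$. The $e$-component $\xi_e$ is absorbed into the one-dimensional variable, so we take $\xi=\xi'\perp e$. For such $\xi'$, define
\begin{align*}
v_{\xi'}(s):=\int_{e^\perp} e^{-ix'\cdot\xi'}u(se+x')\,dx'.
\end{align*}
By Fubini and Cauchy--Schwarz (the $x'$-integration runs over a bounded set), $v_{\xi'}\in L^2(\mathbb R)$ has compact support. Moreover $\hat u(\xi'+i\eta e)=\hat v_{\xi'}(i\eta)$, where the right-hand side is the one-dimensional Fourier transform. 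Lemma \ref{lem:1d} then gives
\begin{align*}
\limsup_{\eta\to+\infty}\frac1\eta\log|\hat u(\xi'+i\eta e)|=\sup\operatorname{supp}v_{\xi'}.
\end{align*}
(When $v_{\xi'}\equiv 0$ both sides equal $-\infty$.) It remains to show $\sup_{\xi'}\sup\operatorname{supp}v_{\xi'}\ge h$.

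\emph{Key step (main obstacle).} This is the step where the real work lies. Suppose, to the contrary, that some $h'<h$ satisfies $\sup\operatorname{supp}v_{\xi'}\le h'$ for all $\xi'\in e^\perp$; that is, $v_{\xi'}=0$ a.e.\ on $(h',\infty)$ for every $\xi'$.

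\begin{itemize}
\item Fix a test function $\psi\in C_c^\infty((h',\infty))$. The function $\xi'\mapsto\int v_{\xi'}(s)\psi(s)\,ds$ is the $(n-1)$-dimensional Fourier transform of $x'\mapsto\int u(se+x')\psi(s)\,ds$. By assumption it vanishes for every $\xi'$.
\item By Fourier uniqueness in $L^2(e^\perp)$, $\int u(se+x')\psi(s)\,ds=0$ for a.e.\ $x'$.
\item Letting $\psi$ range over a countable dense family and using Fubini, we get $u=0$ a.e.\ on $\{x\cdot e>h'\}$.
\end{itemize}

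This contradicts the definition of $h$, since $\operatorname{supp}u$ would then lie in $\{x\cdot e\le h'\}$ and force $h\le h'<h$. Hence the supremum over $\xi$ is at least $h$. Combined with the upper bound, this gives the claimed equality. The delicate points are the measure-theoretic ones: the null sets of $x'$ must be taken uniformly over the countable family of $\psi$, and $v_{\xi'}$ must be well defined for every $\xi'$, which holds because $u\in L^1$.
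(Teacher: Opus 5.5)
Your proof is correct and follows the paper's skeleton. Both arguments push $e^{-ix\cdot\xi}u$ forward along $x\mapsto x\cdot e$ to a compactly supported one-dimensional function, apply Lemma~\ref{lem:1d}, and conclude by contradiction via Fourier uniqueness. Your $v_{\xi'}$ is the paper's $v_\xi$ for $\xi\perp e$; for general $\xi$ the paper's $v_\xi$ is $e^{-is\xi_e}v_{\xi'}(s)$, which has the same support.

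There are two local differences.

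\emph{Upper bound.} You use the direct estimate $|\hat u(\xi+i\eta e)|\le e^{\eta h}\|u\|_{L^1}$, whereas the paper invokes the Paley--Wiener--Schwartz theorem (Proposition~\ref{PW}). Your route is more elementary and self-contained. It also avoids the polynomial factor and the tacit passage to the convex hull of $\operatorname{supp}u$ that Proposition~\ref{PW} needs.

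\emph{Final uniqueness step.} Here the paper is slicker. By keeping all $\xi\in\mathbb R^n$, it recognizes $\langle v_\xi,\varphi\rangle$ as the full $n$-dimensional Fourier transform of $\varphi(x\cdot e)u(x)$. That transform vanishes identically, so $\varphi(x\cdot e)u=0$ follows in one stroke. Because you discarded $\xi_e$, you only control the transverse Fourier transform of the moments $x'\mapsto\int u(se+x')\psi(s)\,ds$. That is why you need the countable dense family and the Fubini argument. Your argument does work: take real $\psi_j\in C_c^\infty((h',\infty))$ dense in $L^2((h',\infty))$, and use that $u(\cdot\,e+x')\in L^2(\mathbb R)$ for a.e.\ $x'$. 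However, you could drop it by also testing against $e^{-is\xi_e}\psi(s)$, which is legitimate since $e^{-is\xi_e}v_{\xi'}$ still vanishes on $(h',\infty)$. That recovers the paper's full Fourier transform and removes the density step.
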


\begin{proof}
A compactly supported function \(u\in L^2(\mathbb R^n)\) is also a compactly supported distribution. Hence, by Proposition \ref{PW}, there exist $C,N>0$ such that
$$
|\hat u(\zeta)|\le C(1+|\zeta|)^N e^{H(\operatorname{Im}\zeta)},
\qquad \zeta\in\mathbb C^n,
$$
where
$$
H(\xi):=\sup_{x\in\operatorname{supp}u}x\cdot \xi.
$$
For $\zeta=\xi+i\eta e$, we have $\operatorname{Im}\zeta=\eta e$, hence
$$
H(\operatorname{Im}\zeta)=H(\eta e)=\eta H(e).
$$
Therefore
$$
|\hat u(\xi+i\eta e)|
\le C(1+|\xi|+\eta)^N e^{\eta H(e)},
$$
which gives
\begin{equation}\label{pwleq-L2}
\sup_{\xi\in\mathbb R^n}
\limsup_{\eta\to+\infty}
\frac1\eta\log|\hat u(\xi+i\eta e)|
\le H(e).
\end{equation}

We now prove that equality holds. Suppose, for a contradiction, that there exists $a<h$ such that
$$
\limsup_{\eta\to+\infty}\frac1\eta\log|\hat u(\xi+i\eta e)|\le a
\qquad\text{for all }\xi\in\mathbb R^n.
$$
Fix $\xi\in\mathbb R^n$ and define a one-dimensional function $v_\xi\in L^2_{\mathrm{comp}}(\mathbb R)$ by
$$
\langle v_\xi,\varphi\rangle
:=
\int_{\mathbb R^n} u(x)\varphi(x\cdot e)e^{-ix\cdot\xi}\,dx,
\qquad \varphi\in C^\infty(\mathbb R).
$$
Equivalently, $v_\xi$ is the compactly supported $L^2$ function whose Fourier transform satisfies
$$
\hat v_\xi(\tau)
=
\int_{\mathbb R^n} u(x)e^{-i\tau x\cdot e}e^{-ix\cdot\xi}\,dx
=
\hat u(\xi+\tau e).
$$
In particular,
$$
\hat v_\xi(i\eta)=\hat u(\xi+i\eta e).
$$
Since $\operatorname{supp}u\subset\{x\cdot e\le h\}$, we have
$$
\operatorname{supp}v_\xi\subset(-\infty,h].
$$
By Lemma~\ref{lem:1d},
$$
\sup\operatorname{supp}v_\xi
=
\limsup_{\eta\to+\infty}\frac1\eta\log|\hat v_\xi(i\eta)|
=
\limsup_{\eta\to+\infty}\frac1\eta\log|\hat u(\xi+i\eta e)|
\le a.
$$
Hence $v_\xi=0$ on $(a,\infty)$, i.e.,
\begin{equation}\label{pweq-L2}
\int_{\mathbb R^n} u(x)\varphi(x\cdot e)e^{-ix\cdot\xi}\,dx=0
\end{equation}
for every $\varphi\in C_{\mathrm{c}}^\infty((a,\infty))$ and every $\xi\in\mathbb R^n$.

Now fix $\varphi\in C_{\mathrm{c}}^\infty((a,\infty))$ and set
$$
\psi(x):=\varphi(x\cdot e)u(x).
$$
Then $\psi\in L^2_{\mathrm{comp}}(\mathbb R^n)$ and by \eqref{pweq-L2} we have
$$
\hat\psi(\xi)
=
\int_{\mathbb R^n} u(x)\varphi(x\cdot e)e^{-ix\cdot\xi}\,dx
=0
\qquad\text{for all }\xi\in\mathbb R^n.
$$
By Fourier inversion, $\psi=0$, so
$$
\varphi(x\cdot e)u(x)=0
$$
as an $L^2$ function on $\mathbb R^n$. Since $\varphi\in C_{\mathrm{c}}^\infty((a,\infty))$ is arbitrary, it follows that
$$
u=0
\qquad\text{on }\{x:x\cdot e>a\}.
$$
Thus
$$
\operatorname{supp}u\subset\{x:x\cdot e\le a\},
$$
which implies $h\le a$, contradicting $a<h$. Therefore, equality holds in \eqref{pwleq-L2}, that is,
$$
\sup_{\xi\in\mathbb R^n}
\limsup_{\eta\to+\infty}
\frac1\eta\log|\hat u(\xi+i\eta e)|
= h.
$$
\end{proof}

\section{Proof of Lemma \ref{thm:LS-equivalence}}
\label{sec:proof-of-ls-equivalence}

We prove (i)--(ii) in turn.
    
\vskip 5pt
(i)  Assume that $u\in H^2_{\mathrm{loc}}(\mathbb{R}^3) \cap C^1\big( \mathbb R^3 \backslash B_R\big)$ satisfies \eqref{eqs}-\eqref{eq:radiation}. Write \(\varPhi_\kappa(\cdot) := \varPhi(\cdot; \kappa)\). 
Denote by $\nu$ the outward unit normal on $\partial B_R$. 

Note that \(\Delta u + \kappa^2 u = - q u\) and
$\Delta\varPhi_\kappa + \kappa^2\varPhi_\kappa = -\delta_0$. Applying Green's second identity on \(B_R\) to \(u\) and a smooth approximation of
$\varPhi_\kappa(\cdot - x)$, we obtain 
\begin{equation}\label{eq:green-rep}
u(x)=\int_{B_R}\varPhi_\kappa(\cdot-x) q u dy
-\int_{\partial B_R}\left(
u\frac{\partial\varPhi_\kappa(\cdot-x)}{\partial\nu}
-\varPhi_\kappa(\cdot-x) \frac{\partial u}{\partial\nu}
\right) ds
~\text{ for a.e. }  x \in B_R.
\end{equation}
Write $u=u^{\mathrm{in}}+u^{\mathrm{sc}}$. For the incident field
$u^{\mathrm{in}}$, which satisfies
$(\Delta+\kappa^2)u^{\mathrm{in}}=0$,  Green's formula gives
\begin{equation}\label{eq:green-inc}
u^{\mathrm{in}}(x)
=\int_{\partial B_R}\left(
\frac{\partial u^{\mathrm{in}}}{\partial\nu}\varPhi_\kappa(\cdot-x)
-u^{\mathrm{in}}\frac{\partial\varPhi_\kappa(\cdot-x)}{\partial\nu}
\right) ds,
~ x \in B_R.
\end{equation}
For the scattered field $u^{\mathrm{sc}}$, applying Green's formula in the
exterior domain and using the radiation condition \eqref{eq:radiation} yields
\begin{equation}\label{eq:green-sc}
\int_{\partial B_R}\left(
\frac{\partial u^{\mathrm{sc}}}{\partial\nu}\varPhi_\kappa(\cdot-x)
-u^{\mathrm{sc}}\frac{\partial\varPhi_\kappa(\cdot-x)}{\partial\nu}
\right) ds=0,
~ x \in B_R.
\end{equation}
Combining \eqref{eq:green-inc} and \eqref{eq:green-sc}, we find
$$
\int_{\partial B_R}\left(
u\frac{\partial\varPhi_\kappa(\cdot-x)}{\partial\nu}
-\varPhi_\kappa(\cdot-x)\frac{\partial u}{\partial\nu}
\right) ds
=-u^{\mathrm{in}}(x), ~ x \in B_R.
$$
Substituting this into \eqref{eq:green-rep} gives
$$
u(x)=e^{i\kappa x\cdot d}+\int_{B_R}\varPhi_\kappa(\cdot-x) q u dy,~ x \in B_R.
$$
Thus, \eqref{LS} holds for \(u\).

\vskip 5pt
(ii) Assume that $u\in L_{\mathrm{loc}}^2(\mathbb{R}^3)$ satisfies \eqref{LS}. Define
\begin{align}\label{eq-lem:ScatteringField-20261004}
    u^{\mathrm{sc}}(x):=\int_{B_R}\varPhi(x-y; \kappa)q(y)u(y) dy, 
    ~ x\in \mathbb R^3. 
\end{align}
Since $\varPhi(\cdot ;\kappa)$ is the fundamental solution for the operator $-\Delta - \kappa^2$ and supp$(q) \subset B_R $, the above implies 
\begin{align*}
    (-\Delta -\kappa^2)u^{\mathrm{sc}}=q u  ~\text{ in }  L^2( \mathbb R^3). 
\end{align*}
Therefore, the solution $u(x) = e^{i\kappa x\cdot d} + u^{\mathrm{sc}}(x)$ (see \eqref{LS}) satisfies 
$$
(\Delta+\kappa^2)u
=(\Delta+\kappa^2)e^{i\kappa x\cdot d}
+(\Delta+\kappa^2)u^{\mathrm{sc}}
=0-q u=-q u
~\text{ in } L^2(\mathbb R^3),
$$
i.e., $u$ satisfies \eqref{eqs}. By the ellipticity of the operator $\Delta$, one can check that $u \in H^2_{\mathrm{loc}}(\mathbb R^3)$ and it is smooth outside supp$(q)$. 

Next, \eqref{fm} gives, for \(y \in B_R\) and $\hat x := x/|x|$,
\begin{align}\label{AsymptoticsOfFundamentalSolution-20261004}
    \varPhi(x-y; k)
    =\frac{ e^{i k |x-y|} }{4\pi|x-y|}
    =\frac{ e^{i k |x|} }{4\pi|x|}
    \left( e^{-i k \hat x\cdot y}+\mathcal{O}(|x|^{-1})\right) 
    \quad\text{as }|x|\to+\infty,
\end{align}
where \(\mathcal{O}(|x|^{-1})\) denotes a remainder whose magnitude and first-order derivatives are bounded by a constant multiple of \(|x|^{-1}\) as \(|x| \rightarrow +\infty\). 
Then, we see from \eqref{eq-lem:ScatteringField-20261004} and \eqref{AsymptoticsOfFundamentalSolution-20261004} (with $k=\kappa$) that 
$$
u^{\mathrm{sc}}(x)
=\frac{e^{i\kappa|x|}}{|x|}
\left(
\frac{1}{4\pi}\int_{\mathbb{R}^3}
e^{-i\kappa\hat x\cdot y}q(y)u(y)\,\mathrm{d}y
+\mathcal{O}(|x|^{-1})
\right)
\quad\text{as }|x|\to+\infty.
$$
This shows that \(u^{\mathrm{sc}}\) satisfies the Sommerfeld radiation condition \eqref{eq:radiation} and that its far-field pattern is given by \eqref{FP}. This completes the proof.

\vspace{1em}

\textbf{Acknowledgments}
\vspace{0.5em}

The work of Guanghui Hu was supported by the National Natural Science Foundation of China (No. 12425112), the Fundamental Research Funds for Central Universities in China (No. 63263073 and 63261173) and a Key Program (No. 25JCZDJC00970) of the Natural Science Foundation of Tianjin, China. The work of Yubiao Zhang was partially supported by the National Natural Science Foundation of China under Grant No. 12671535. The work of Jun Zou was substantially supported by Hong Kong RGC General Research Fund (projects 14311125 and 14306623) and NSFC/Hong Kong RGC Joint Research Scheme (project N\_CUHK465/22).

\end{document}